\theoremstyle{definition}
\newtheorem{thm}{Theorem}
\newtheorem{lem}[thm]{Lemma}
\newtheorem{cor}[thm]{Corollary}
\newtheorem{rem}[thm]{Remark}
\newtheorem{exa}[thm]{Example}
\numberwithin{equation}{section}
\newtheorem{claim}{Claim}
\def\Vir{\operatorname{Vir}}
\newcommand{\C}{\mathbb C}
\newcommand{\N}{\mathbb{N}}
\newcommand{\Z}{\mathbb{Z}}
\renewcommand{\a}{\alpha}
\renewcommand{\phi}{\varphi}
\def\tp{\widetilde{\phi}}
\def\b{\beta}
\begin{document}
\title[Tensor product weight modules]{Tensor product weight modules over the Virasoro algebra}
\author{Hongjia Chen, Xiangqian Guo and Kaiming Zhao}


\begin{abstract} The tensor product
of highest weight modules with intermediate series modules over the
Virasoro algebra was discussed by Zhang  \cite{Z} in 1997. Since
then the irreducibility problem for the tensor products has been
open. In this paper, we determine the necessary and sufficient
conditions for these tensor products to be simple. From non-simple
tensor products, we can get other interesting simple Virasoro
modules.
We also obtain that any two such  tensor products are isomorphic if
and only if the corresponding highest weight modules and
intermediate series  modules are isomorphic respectively.
Our method is to develop a ``shifting technique" and to widely use
Feigin-Fuchs' Theorem on singular vectors of Verma modules over the
Virasoro algebra.\vskip5pt

\noindent{\bf Keywords: } Virasoro algebra, highest weight module,
intermediate series module, simple module.\vskip5pt

\noindent{\bf AMS classification: } 17B10, 17B20, 17B65, 17B66,
17B68.
\end{abstract}
\maketitle



\section{Introduction}

\vskip 5pt We denote by $\mathbb{Z}$, $\mathbb{Z}_+$, $\mathbb{N}$
and $\mathbb{C}$
   the sets of  all integers, nonnegative integers, positive integers and complex numbers,
   respectively.

The Virasoro algebra $\Vir$ is an infinite dimensional Lie algebra
over the complex numbers $\C$, with basis $\{d_n,C\,\, |\,\, n \in
\Z\}$ and defining relations
$$
[d_{m},d_{n}]=(n-m)d_{n+m}+\delta_{n,-m}\frac{m^{3}-m}{12}C, \quad
m,n \in \Z,
$$
$$
[C, d_{m}]=0, \quad m \in \Z,
$$ which is the universal central extension
of the so-called infinite dimensional Witt algebra. 
The algebra $\Vir$ is one of the most important Lie algebras both in
mathematics and in mathematical physics, see for example \cite{IK,
KR} and references therein. In particular,
 it has been widely used in   quantum physics \cite{GO},
conformal field theory \cite{DMS}, Kac-Moody algebras \cite{K,MoP}, vertex
operator algebras \cite{DMZ,FZ}, and so on.

The representation theory on the Virasoro algebra has been
attracting a lot of attentions from mathematicians and physicists.
There are two classical families of simple Harish-Chandra
$\Vir$-modules: highest (lowest) weight modules (completely
described in \cite{FF}) and the so-called intermediate series
modules. In \cite{M} it is shown that these two families exhaust all
simple   Harish-Chandra modules. In \cite{MZ1} it is even shown that
the above modules exhaust all simple weight modules admitting a
nonzero finite dimensional weight space. Now mathematicians have
shifted their attentions to non-Harish-Chandra simple modules,
mainly simple weight modules with infinitely dimensional weight
spaces and non-weight modules.


As for simple weight modules over $\Vir$ with infinitely dimensional
weight spaces, the earliest examples were constructed from tensor
products of simple highest weight modules and intermediate series
modules (\cite{Z}) in 1997. Later a  $4$-parameter family of simple
weight modules with infinitely dimensional weight spaces were given
in \cite{CM} in 2001. Recently in \cite{LLZ}   another huge class of
simple modules with infinitely dimensional weight spaces were
obtained using non-weight modules introduced in \cite{MZ2}.

At the same time for the last decade, various families of nonweight
simple Virasoro modules were studied in \cite{FJK, GLZ, LGZ, LZ, Y,
MW, OW}. These (except the modules in \cite{LZ} and \cite{MW}) are
basically various versions of Whittaker modules constructed using
different tricks. In particular, all the above Whittaker modules and
even more were described in a uniform way in \cite{MZ2}.

%

In the paper \cite{Z}, Zhang considered the tensor products of
simple highest weight modules with simple intermediate series
modules, and he provided some sufficient conditions for the tensor
products to be simple. Since then the irreducibility problem for the
tensor products has been open. It was even not known in \cite{Z}
whether there was a non-simple tensor product when the highest
weight module is not a Verma module.  Our purpose in the present
paper is to completely solve this problem. More precisely we
determine the necessary and sufficient conditions for such tensor
products to be simple, using two polynomials obtained from the
singular vectors of the original highest weight modules. Moreover we
can determine the conditions for two of these tensor product modules
to be isomorphic. We remark that the tensor products of intermediate
series modules over the Virasoro algebra never gives irreducible
modules \cite{Zk}.

To describe our results, we recall some notations for the Virasoro
algebra and its modules.
We first define the modules of intermediate series
$V_{\alpha,\beta}$ for any $\a, \b\in\C$. As a vector space
$V_{\a,\b}=\oplus_{n\in\Z}\C v_n$ and the action of $\Vir$ on
$V_{\a,\b}$ is given by
\begin{equation}\label{intermediate}
d_m \cdot v_n = (\a+n +m\b)v_{m+n}, \quad C \cdot v_n =0,\ \forall
m,n\in\Z. \end{equation} It is well known that $V_{\a,\b} \cong
V_{\a +n,\b}$, and $V_{\a,0} \cong V_{\a,1}$ if $\a \notin \Z$. 
It is also well known that $V_{\a,\b}$  is simple if and only if $\a
\notin\Z$ or $\b\notin\{0,1\}$. Moreover $V_{0,0}$ has a unique
nonzero proper submodule $\C v_0$ and we denote
$V^\prime_{0,0}=V_{0,0}/\C v_0$; $V_{0,1}$ has a unique nonzero
proper submodule $V^\prime_{0,1}=\sum_{i\neq0}\C v_i$ and $V^\prime_{0,0}\cong V^\prime_{0,1}$. 
 For convenience we will denote $V^\prime_{\a,\b}=V_{\a,\b}$ whenever
$V_{\a,\b}$ is simple. We remark that $V^\prime_{\a,0}\cong
V^\prime_{\a,1}$ for all $\a\in\C$.

To avoid repetition, throughout this paper when we write
$V'_{\a,\b}$ \textbf{we always assume that $0 \leq \mathfrak{Re} \a
<1$ and $\b\ne 1$,} where $\mathfrak{Re}\a$ is the real part of
$\a$.

We recall that a Virasoro module $V$ is a {\bf weight module} if it
is the sum of all its weight spaces $V_{\lambda}=\{v\in V\,|\,
d_0v=\lambda v\}$ for some $\lambda\in\C$. Vectors in $V_\lambda$
are call {\bf weight vectors}. An element $x$ in the universal
enveloping algebra $U(\Vir)$ is called {\bf homogeneous} if $[d_0,
x]=mx$ for some $m\in\Z$ and $m$ is called the {\bf degree} of $x$,
denoted by $\deg(x)=m$. Let $U(\Vir)_{m}$ be the subspace consisting
of all elements of degree $m$.

Next we define the highest weight modules. Denote
$\Vir_{\pm}=\sum_{i\in\N}\C d_{\pm i}$. For any $h, c\in\C$, we let
$\C u$ be the $1$-dimensional module over the subalgebra
$\Vir_+\oplus\C d_0\oplus\C C$ defined by
\begin{equation}\label{highest}
\Vir_+ u=0,\ d_0 u=hu, \ {\rm and\ }\quad C u=c u. \end{equation}
Then we get the induced $\Vir$-module, called \textbf{Verma module}:
$$M(c,h)=U(\Vir)\bigotimes_{U(\Vir_+\oplus\C d_0\oplus\C C)}\C u.$$
 Any nonzero quotient module of
$M(c,h)$ is called a \textbf{highest weight module} with highest
weight $(c,h)$.

It is well known that the Verma module $M(c,h)$ has a unique maximal
submodule $J(c,h)$ and the corresponding simple quotient module is
denoted by $V(c,h)$. A nonzero weight vector $u'\in M(c,h)$ is
called a \textbf{singular vector} if $\Vir_+ u'=0$. It is clear that
$J(c,h)$ is generated by all singular vectors in $M(c,h)$ not
contained in $\C u$, and that $M(c,h)=V(c,h)$ if and only if
$M(c,h)$ does not contain any other singular vectors besides $\C u$.
In \cite{FF} (or in \cite{A} which is a refined form)  the singular
vectors are described explicitly. In particular, $J(c,h)$ can be
generated by at most two singular vectors. If $J(c,h)$ is generated
by two singular vectors, we can find homogeneous $Q_1, Q_2\in
U(\Vir_-)$ such that $J(c,h)=U(\Vir_-)Q_1u+U(\Vir_-)Q_2u$. If
$J(c,h)$ is generated by one singular vector, we can find the unique
$Q_1\in U(\Vir_-)$ up to a scalar multiple such that
$J(c,h)=U(\Vir_-)Q_1u$; in which case we set $Q_2=Q_1$ for
convenience. When $M(c,h)=V(c,h)$ we set $Q_2=Q_1=0$.


Fix any $c,h,\a,\b\in\C$ with $0 \leq \mathfrak{Re} \a <1$   and
$\b\ne 1$. For any $n\in\Z$, we have a linear map $\phi_n$ from
$U(\Vir_-)$ to $\C$ defined by
$$\phi_n(d_{-k_r}\cdots d_{-k_1})=\prod_{j=1}^r(k_j\b-\a-n-\sum_{i=1}^jk_i),$$
for all $d_{-k_r}\cdots d_{-k_1}\in U(\Vir_-)$. For more details of
this map see Sect.2.

For any    integer $n$ let
$W^{(n)}$ be the submodule of $V(c,h) \otimes V^\prime_{\a,\b}$
generated by $u\otimes v_{i}, i>n$.
Then our main results can be
stated as


\begin{thm}\label{simplicity} Let $c,h,\a,\b\in\C$ with $0 \leq \mathfrak{Re} \a
<1$ and $\b\ne 1$.
\begin{enumerate}[(a)]
\item $V(c,h) \otimes V^\prime_{\a,\b}$ is simple if and only if
there is no (nonzero if $(\a,\b)=(0,0)$) integer $n$ such that
$\phi_n(Q_1)=\phi_n(Q_2)=0$.
\item If $n$ is the maximal integer (nonzero if $(\a,\b)=(0,0)$) with $\phi_n(Q_1)=\phi_n(Q_2)=0$,  then
$W^{(n)}$ is the unique simple submodule and all its nonzero weight spaces are infinite-dimensional.
\end{enumerate}
\end{thm}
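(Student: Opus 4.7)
The plan is to reduce the simplicity question for $V(c,h)\otimes V^\prime_{\alpha,\beta}$ to determining when the vectors $u\otimes v_m$ lie in a given submodule, and then to detect this via the scalars $\phi_n(Q_i)$. First I would grade $V(c,h)=\bigoplus_{k\ge0}V(c,h)_k$ by level, with $V(c,h)_0=\C u$; this induces a filtration on the tensor product such that every nonzero element has a well-defined leading component living in $\C u\otimes V^\prime_{\alpha,\beta}$.

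The technical heart is the \emph{shifting lemma}: any nonzero weight vector $w\in V(c,h)\otimes V^\prime_{\alpha,\beta}$ can be sent by some element of $U(\Vir)$ to a nonzero finite combination $\sum_{m\in F}\lambda_m\,u\otimes v_m$. I would prove this by induction on the level of the leading component of $w$: since $d_j u=0$ for $j>0$, applying a carefully chosen $d_j$ strips a $d_{-k}$ off the leading monomial via the bracket $[d_j,d_{-k}]$ and produces a vector of strictly smaller leading level. The only obstruction is cancellation via the singular-vector relations $Q_1 u=Q_2 u=0$ in $V(c,h)$; handling this forces the induction to be organized around the explicit Feigin--Fuchs form of $Q_1,Q_2$, and is the main difficulty of the proof. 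Once such a combination is reached, a Vandermonde argument using $d_j(u\otimes v_m)=(\alpha+m+j\beta)u\otimes v_{m+j}$ and the normalization $\beta\ne 1$ shows that any nonzero submodule of the tensor product contains $W^{(N)}$ for some $N\in\Z$. Consequently simplicity is equivalent to $u\otimes v_n\in W^{(n)}$ holding for every $n$ (with the degenerate exception $n=0$ when $(\alpha,\beta)=(0,0)$, where $v_0$ has been quotiented away).

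To detect whether $u\otimes v_n\in W^{(n)}$, I would pass to the Verma tensor product $M(c,h)\otimes V^\prime_{\alpha,\beta}$ and expand $Q_i(u\otimes v_{n+\deg Q_i})$ via the iterated coproduct. The ``entirely left'' term $Q_i u\otimes v_{n+\deg Q_i}$ vanishes in $V(c,h)$; the intermediate coproduct terms lie in $V(c,h)_k\otimes\C v_{m'}$ with $k\ge 1$ and $m'>n$, which the shifting lemma together with induction on $\deg Q_i$ places inside $W^{(n)}$; the remaining contribution is a scalar multiple of $u\otimes v_n$, and careful bookkeeping shows the scalar is exactly $\phi_n(Q_i)$---the combinatorial formula defining $\phi_n$ is designed precisely for this identity. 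Since $u\otimes v_{n+\deg Q_i}\in W^{(n)}$, it follows that $\phi_n(Q_i)\,u\otimes v_n\in W^{(n)}$ for $i=1,2$, so $(\phi_n(Q_1),\phi_n(Q_2))\ne (0,0)$ implies $u\otimes v_n\in W^{(n)}$; conversely, when $\phi_n(Q_1)=\phi_n(Q_2)=0$, the fact that $J(c,h)$ is generated as a $U(\Vir)$-module by $Q_1 u$ and $Q_2 u$ shows no $U(\Vir)$-relation can bring $u\otimes v_n$ into $W^{(n)}$, so $W^{(n)}$ is proper. This proves (a). For (b), take $n$ maximal with $\phi_n(Q_1)=\phi_n(Q_2)=0$; then $W^{(n)}$ is proper, and by maximality, for every $m>n$ the same computation gives $u\otimes v_m\in W^{(m)}$ and hence $W^{(m)}=W^{(m-1)}$, so $W^{(m)}=W^{(n)}$ for all $m\ge n$. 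Consequently any nonzero submodule of $W^{(n)}$ contains some $W^{(M)}$ with $M\ge n$, hence equals $W^{(n)}$, proving $W^{(n)}$ is simple and unique. Infinite-dimensionality of each nonzero weight space of $W^{(n)}$ is immediate from the decomposition $(V(c,h)\otimes V^\prime_{\alpha,\beta})_\mu=\bigoplus_{k\ge 0}V(c,h)_k\otimes\C v_{\mu-h-\alpha+k}$ together with the shifting lemma, which shows $W^{(n)}$ meets every level in this sum.
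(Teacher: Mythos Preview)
Your detection of $u\otimes v_n\in W^{(n)}$ via $\phi_n(Q_i)$, and your handling of part~(b), are essentially the paper's Lemma~\ref{mod_W} and the argument that follows it, modulo an index slip: since $\deg Q_i<0$, the vector lying in $W^{(n)}$ is $u\otimes v_{\,n-\deg Q_i}$, not $u\otimes v_{\,n+\deg Q_i}$. The converse (properness of $W^{(n)}$ when both $\phi_n(Q_i)$ vanish) also matches the paper, which makes your phrase ``no $U(\Vir)$-relation can bring $u\otimes v_n$ into $W^{(n)}$'' precise by showing that $\tilde\phi_n$ kills $J(c,h)$, hence descends to $V(c,h)$, and that the weight-$n$ piece of $W^{(n)}$ lies in its kernel.

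The genuine gap is your shifting lemma. A single application of $d_j$ does \emph{not} lower the leading level of a weight vector $w=\sum_{k=0}^{K}w_k\otimes v_{n-k}$: the coproduct gives
\[
d_j w \;=\; \sum_k (d_j w_k)\otimes v_{n-k}\;+\;\sum_k(\alpha+n-k+j\beta)\,w_k\otimes v_{n-k+j},
\]
and the second sum retains the level-$K$ term $(\alpha+n-K+j\beta)\,w_K\otimes v_{n-K+j}$ for every $j$. The bracket $[d_j,d_{-k}]$ affects only the first sum, so your inductive step fails at the outset; the obstruction is the coproduct term, not the singular-vector relations. The paper's Lemma~\ref{submod} circumvents this with a genuinely quadratic device: in the shifted model $L'$ (where a weight vector is $w\otimes t^k$ with $w\in V(c,h)$ arbitrary) one forms a specific linear combination of $d_md_n(w\otimes t^k)$ and $d_{m+n}(w\otimes t^k)$ for large $m,n$, chosen to annihilate one homogeneous component of $w$; the resulting polynomial identity in $m,n$ then forces $w$ to have a \emph{single} homogeneous component, after which $U(\Vir_+)$ and the simplicity of $V(c,h)$ yield $u$. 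Finally, your argument for infinite-dimensional weight spaces in~(b) only covers weights $>h+\alpha+n$; for the remaining weights the paper invokes \cite{MZ1}, which says that a simple weight $\Vir$-module with one infinite-dimensional weight space has all nonzero weight spaces infinite-dimensional.
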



\begin{thm}\label{isomorphism}
Let $V$ and $V'$ be any two highest weight modules (not necessarily
simple) of the Virasoro algebra, and let $\a,\b,\a_1,\b_1\in\C$.
Then $V \otimes V^\prime_{\a,\b} \cong V' \otimes
V^\prime_{\a_1,\b_1}$ if and only if $V \cong V'$ and $V'_{\a,\b}
\cong V'_{\a_1,\b_1}$.
\end{thm}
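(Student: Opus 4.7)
The forward direction is immediate: given isomorphisms $V \cong V'$ and $V'_{\a,\b} \cong V'_{\a_1,\b_1}$, their tensor product is the desired map. The plan for the converse is to extract, from an abstract isomorphism $\Phi: V \otimes V'_{\a,\b} \xrightarrow{\sim} V' \otimes V'_{\a_1,\b_1}$, first the parameters $(c,h,\a,\b)$ and then the module $V$ itself.

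First I would match two easy invariants. Since $C$ acts as $0$ on each intermediate series factor, it acts as the scalar $c$ on the left and $c'$ on the right, giving $c=c'$. Comparing the $d_0$-spectra, which are the cosets $h+\a+\Z$ and $h'+\a_1+\Z$, yields $\mu := h+\a-h'-\a_1 \in \Z$; so $\Phi$ shifts the $V'_{\a,\b}$-indexing by $\mu$.

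The heart of the argument is to identify $V'_{\a,\b}$ with $V'_{\a_1,\b_1}$. My plan is to track the images $y_n := \Phi(u \otimes v_n)$ and decompose $y_n = \sum_{k\ge 0} w_{n,k} \otimes v_{n+\mu+k}$ with $w_{n,k}\in V'_{h'-k}$. Transporting the identity $d_i(u\otimes v_n) = (\a+n+i\b)\,u\otimes v_{n+i}$ through $\Phi$ gives $d_i y_n = (\a+n+i\b)\,y_{n+i}$. Projecting both sides onto the top component $\C u'\otimes V'_{\a_1,\b_1}$, and using the key truncation $d_i V'_{h'-k}=0$ for $i>k$ (which kills the correction terms from $w_{n,i}$ once $i$ exceeds the maximal $V'$-depth of $y_n$) leaves, for $i\gg 0$, the relation
\begin{equation*}
a_n(\a_1+n+\mu+i\b_1) \;=\; (\a+n+i\b)\,a_{n+i},
\end{equation*}
where $a_n$ is defined by $w_{n,0}=a_n u'$. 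Varying $i$ and $n$ and separating the $i$-linear and $i$-independent parts of this identity forces $\b=\b_1$ and $\a\equiv\a_1\pmod\Z$; the normalization $0\le\mathfrak{Re}\a,\mathfrak{Re}\a_1<1$ then gives $\a=\a_1$, whence $h=h'$.

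Finally, once $V$ and $V'$ are both highest weight quotients $M(c,h)/J_V$, $M(c,h)/J_{V'}$ of the same Verma module and $\Phi: V\otimes V'_{\a,\b} \xrightarrow{\sim} V'\otimes V'_{\a,\b}$, I would use the submodule description from Theorem \ref{simplicity} (and the methods of its proof) to match $J_V$ with $J_{V'}$. The proper submodule lattice of $(M(c,h)/J)\otimes V'_{\a,\b}$ is detected, through the linear maps $\phi_n$ applied to the singular vector generators $Q_1,Q_2$ of $J$, by $J$ itself; invariance of this lattice under $\Phi$, combined with the Feigin--Fuchs classification of singular vectors, forces $J_V=J_{V'}$ and thus $V\cong V'$. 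The main obstacle throughout is the shifting step: cleanly decoupling the intertwined corrections from lower $V'$-depth components in order to isolate $\a$ and $\b$ requires delicate choices of $n$ and $i$, and uses the truncation $d_i V'_{h'-k}=0$ in an essential way.
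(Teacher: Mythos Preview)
Your proposal has two genuine gaps, one in each half of the argument.

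\textbf{Extracting $\a,\b$.} The relation you derive,
\[
a_n(\a_1+n+\mu+i\b_1) \;=\; (\a+n+i\b)\,a_{n+i},
\]
involves the unknown scalar $a_{n+i}$, which varies with $i$ in a way you have not controlled. You therefore cannot ``separate the $i$-linear and $i$-independent parts'' to conclude $\b=\b_1$: nothing prevents, say, $a_{n+i}$ from absorbing a rational function of $i$. (You also have not established that any $a_n\neq 0$.) The paper sidesteps this by applying \emph{two} operators $d_m d_n$ and $d_{m+n}$ to the \emph{same} element $u\otimes t^k$. Since both send $u\otimes t^k$ to scalar multiples of the same $\psi(u\otimes t^{m+n+k})$, the unknown image cancels and one obtains genuine polynomial identities in $m,n$, from which $\a=\a_1$, $\b=\b_1$ follow by comparing coefficients. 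This same computation also forces $\psi(u\otimes t^k)$ to have a single nonzero component $P_r u_1\otimes t^{k-r}$, which is essential for the next step.

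\textbf{Matching $V$ with $V'$.} Your plan to use the submodule lattice of the tensor product, together with Theorem~\ref{simplicity} and Feigin--Fuchs, is not a proof. Theorem~\ref{simplicity} concerns only the \emph{simple} quotient $V(c,h)$, whereas here $V,V'$ are arbitrary highest weight modules. More fatally, in the generic case both tensor products are simple, so their submodule lattices are trivial and carry no information whatsoever about $J_V$ or $J_{V'}$. The paper proceeds instead in a direct, elementary way: once one knows $\psi(u\otimes t^k)=P_{r,k}u_1\otimes t^{k-r}$, the fact that the vectors $u\otimes t^k$ generate $L'$ forces each $P_{r,k}u_1$ to be a nonzero scalar multiple of $u_1$, hence $r=0$ and $h=h_1$. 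After normalizing the scalars, one checks by induction on degree that $\psi(Pu\otimes t^k)=Pu_1\otimes t^k$ for all $P\in U(\Vir_-)$, and the assignment $Pu\mapsto Pu_1$ is then visibly a $\Vir$-module isomorphism $V\to V_1$.
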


As a direct consequence we have

\begin{thm}\label{isomorphism-coro} Let $c,h,\a,\b, c_1,h_1,\a_1, \b_1\in\C$ with $0 \leq \mathfrak{Re}
\a, \mathfrak{Re} \a_1 <1$ and $\b, \b_1\ne 1$. Then $ V(c,h)
\otimes V^\prime_{\a,\b}\cong V(c_1,h_1) \otimes V^\prime_{\a_1,
\b_1} $ if and only if $c_1=c, h_1=h, \a_1=\a$ and
$\b_1=\b$.\end{thm}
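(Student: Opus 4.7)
The plan is to derive Theorem \ref{isomorphism-coro} directly from Theorem \ref{isomorphism} by specializing $V = V(c,h)$ and $V' = V(c_1,h_1)$. The ``if'' direction is immediate, since an isomorphism on each tensor factor induces one on the tensor product. For the ``only if'' direction, Theorem \ref{isomorphism} reduces the hypothesis $V(c,h)\otimes V^\prime_{\a,\b}\cong V(c_1,h_1)\otimes V^\prime_{\a_1,\b_1}$ to two separate isomorphisms: $V(c,h)\cong V(c_1,h_1)$ and $V^\prime_{\a,\b}\cong V^\prime_{\a_1,\b_1}$. Thus everything comes down to recovering the parameters $(c,h)$ from the simple highest weight module, and $(\a,\b)$ from the simple intermediate series module, under the stated normalization.

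For the highest weight factor, I would argue that $C$ acts as the scalar $c$ on every vector of $V(c,h)$, so the central charges must agree. The generating vector $u\in V(c,h)$ spans a one-dimensional $d_0$-weight space of weight $h$, and all other $d_0$-weights of $V(c,h)$ are of the form $h-k$ with $k\in\N$ (since $\Vir_-$ lowers the $d_0$-weight by positive integers and $\Vir_+ u=0$). Hence $h$ is uniquely characterized as the maximum of the weight support, so $h=h_1$ as well.

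For the intermediate series factor, I would first look at the set of $d_0$-weights of $V^\prime_{\a,\b}$, which is either $\a+\Z$ or $\a+\Z$ with one element removed in the degenerate case, and similarly for $V^\prime_{\a_1,\b_1}$. Equality of supports gives $\a-\a_1\in\Z$, and the normalization $0\leq\mathfrak{Re}\,\a,\mathfrak{Re}\,\a_1<1$ forces $\a=\a_1$. To pin down $\b$, I would exploit that all nonzero weight spaces are one-dimensional: any isomorphism sends $v_n\mapsto\lambda_n v'_n$ for scalars $\lambda_n$, and intertwining with $d_m$ yields
\[
(\a+n+m\b)\lambda_{m+n}=(\a+n+m\b_1)\lambda_n
\]
for all admissible $m,n\in\Z$. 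Taking $m=\pm 1$ and any generic $n$ for which neither factor vanishes, one immediately solves $\b=\b_1$.

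The only mild subtlety, and the one place to be careful, is the degenerate case $(\a,\b)=(0,0)$ or $(\a_1,\b_1)=(0,0)$, where the weight $0$ is absent; here one simply chooses $n\in\Z$ staying away from the removed weight and argues as above. Otherwise the corollary is a routine bookkeeping consequence of Theorem \ref{isomorphism}, so I do not anticipate any serious obstacle.
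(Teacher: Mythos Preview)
Your approach is exactly how the paper proceeds: Theorem \ref{isomorphism-coro} is stated as a direct consequence of Theorem \ref{isomorphism}, with the parameter recovery from $V(c,h)\cong V(c_1,h_1)$ and $V'_{\a,\b}\cong V'_{\a_1,\b_1}$ left implicit as standard. One minor inaccuracy to fix: using only $m=\pm 1$ in your intertwining relation and eliminating the $\lambda$'s yields $(\a+n+\b)(\a+n+1-\b)=(\a+n+\b_1)(\a+n+1-\b_1)$, i.e.\ $\b(1-\b)=\b_1(1-\b_1)$, which allows the spurious branch $\b_1=1-\b$; you need one further value such as $m=2$ (giving, together with two steps of $m=1$, the extra relation $\b\b_1(\b-\b_1)=0$) to eliminate it under the hypothesis $\b,\b_1\neq 1$, or simply invoke the well-known classification of the $V'_{\a,\b}$ under this normalization.
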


The paper is organized as follows. In Sect.2, we prove Theorem 1.
The main method is to develop a ``shifting technique" and to widely
use Feigin-Fuchs' Theorem on singular vectors of Verma modules over
the Virasoro algebra. Theorem 2 is proved in Sect.3, and we show
that the simple tensor products are not isomorphic to other known
simple weight modules. In the last section we give several examples
to illustrate our results. In particular, when $V(c,h) \otimes
V^\prime_{\a,\b}$ is not simple, we still can get simple modules by
considering submodules or subquotients of it.

\section{Simplicity}\label{section_simlicity}


In this section we will prove Theorem 1. Fix any $c, h, \a, \b\in\C$
with $0 \leq \mathfrak{Re} \a <1$ and $\b\ne 1$. Let $V$ be a
highest weight module (not necessarily simple) with highest weight
vector $u$ of highest weight $(c, h)$ and $V_{\a,\b}$ have a basis
as in \eqref{intermediate}. We will consider the tensor product
modules $V\otimes V_{\a,\b}$ and $V\otimes V'_{\a,\b}$. Let us first
introduce our ``shifting technique" for the tensor product.

\def\ot{\otimes}
The vector space $L=V\otimes \C[t^{\pm1}]$ can be endowed with a
$\Vir$-module structure via
$$d_k (Pu \otimes t^n)=(d_k+\a+n-\deg(P)+k\b)Pu \otimes t^{n+k},$$
for all $k, n\in\Z$ and homogeneous $P\in U(\Vir_-)$, and the action
of $C$ is the scalar $c$. When $\a=\b=0$, we see that $L$ has a
submodule spanned by $Pu\otimes t^{\deg(P)}$ for all homogeneous
$P\in U(\Vir_-)$, which is just a copy of $V(c,h)$; let $L'$ be the
corresponding quotient module in this case, and we denote $L'=L$
otherwise.

It is easy to check that $V\otimes V_{\a,\b}\cong L$ as Virasoro
modules via the following map: for any $m\in\Z_+$
\begin{equation}\label{identify}
V\otimes V_{\a,\b} \rightarrow L, \quad Pu \ot v_n \mapsto Pu\otimes
t^{n+\deg(P)},\ \forall\ n\in\Z, P\in U(\Vir_-)_{-m}.
\end{equation}
Also this map induces an isomorphism $V\otimes V'_{\a,\b}\cong L'$
in the case $\a=\b=0$. Thus in the following of this section we will
consider $L$ and $L'$ instead of $V\otimes V_{\a,\b}$ and $V\otimes
V'_{\a,\b}$. The advantage of the ``shifting technique"  of the
notation for $L=\bigoplus_{n\in\Z} V\ot t^n$ is  the weight space
decomposition, that is,
$$V\ot t^n=\{ x\in L\ |\ d_0x=(\a+h+n)x \},\ \forall\ n\in\Z.$$
We will see the power of this advantage in the following proofs.

 The
following observation is obvious

\begin{lem}  \label{span}
The Virasoro module $L'$  is generated by $\{u \otimes t^k| k \in
\Z\}$. \qed
\end{lem}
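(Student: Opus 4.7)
The plan is to show that every vector of the form $Pu \otimes t^n$, where $P$ is a homogeneous monomial in $U(\Vir_-)$ and $n \in \Z$, lies in the $\Vir$-submodule $S$ of $L$ generated by $\{u \otimes t^k : k \in \Z\}$. Since the weight-space decomposition $L = \bigoplus_{n \in \Z} V \otimes t^n$ and the fact that $V$ is a highest weight module show that such vectors span $L$, their images will span $L'$, so this will settle the lemma after passing to the quotient in the case $(\a,\b) = (0,0)$.

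I would argue by induction on the length $r$ of a PBW monomial $P = d_{-k_r}\cdots d_{-k_1}$ with each $k_j \in \N$. The base case $r = 0$ is just the observation that $u \otimes t^n$ is already one of the prescribed generators. For the inductive step, factor $P = d_{-k} P'$ with $P'$ of length $r-1$ and apply the defining formula for the action to $P'u \otimes t^{n+k}$. A direct computation gives
\begin{equation*}
d_{-k}\bigl(P'u \otimes t^{n+k}\bigr) \;=\; Pu \otimes t^n \;+\; \bigl(\a + n + k - \deg(P') - k\b\bigr)\, P'u \otimes t^n,
\end{equation*}
so that $Pu \otimes t^n$ is a $\C$-linear combination of the two vectors $d_{-k}(P'u \otimes t^{n+k})$ and $P'u \otimes t^n$. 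By the inductive hypothesis applied to the shorter monomial $P'$ at shifts $n+k$ and $n$, both of these vectors lie in $S$, and hence so does $Pu \otimes t^n$.

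Finally, the generation statement for $L$ descends to the quotient $L'$ in the case $(\a,\b) = (0,0)$ with nothing to check, because the images of the $u \otimes t^k$ generate a submodule containing the image of every $Pu \otimes t^n$. There is really no substantive obstacle in this lemma; the only thing to be careful about is the interplay between $\deg P'$ and $\deg P = \deg P' - k$ inside the defining shift formula for the $\Vir$-action on $L$.
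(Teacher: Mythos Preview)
Your argument is correct. The paper itself offers no proof beyond the \qed, treating the lemma as an immediate observation; your induction on the length of the PBW monomial, using the action formula to peel off one $d_{-k}$ at a time, is exactly the natural way to unpack that observation, and the passage to the quotient $L'$ is handled cleanly.
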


From now on in this section, we assume that  $V=V(c,h)$ is the
simple highest weight module with highest weight $(c,h)$. Thus
$L\cong V(c,h)\ot V_{\a,\b}$ and $L'\cong V(c,h)\ot V'_{\a,\b}$. The
following lemma is crucial in this paper.

\begin{lem}
\label{submod} For any nonzero $\Vir$-submodule $W$ of $L'$, there
exists $N \in \Z$ such that $V(c,h) \otimes t^n \subseteq W$ for all
$n \geq N$.
\end{lem}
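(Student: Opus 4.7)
The plan is to pass from any $0\neq w\in W$ to an element of the form $u\otimes t^N$ (for some large $N$), and then propagate this to the full weight space $V(c,h)\otimes t^M$ for every $M\geq N_1$. Three stages: reduce to a weight vector, isolate $u\otimes t^N$, and inductively generate $Pu\otimes t^M$ for every $P\in U(\Vir_-)$.

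For the reduction, observe that $L=\bigoplus_n V(c,h)\otimes t^n$ is the weight-space decomposition for $d_0$, so any nonzero $w\in W$ splits into weight components that lie in $W$; picking one gives $w=v\otimes t^{n_0}$ with $0\neq v\in V(c,h)$. Writing $v=\sum_{m=0}^{M}v_m$ with $v_m\in V_{h-m}$ and $v_M\neq 0$, the fact that $d_k v_m=0$ in $V(c,h)$ for $k>M$ gives
\[
d_k(v\otimes t^{n_0})=\sum_{m}(\alpha+n_0+m+k\beta)\,v_m\otimes t^{n_0+k},
\]
so $d_k$ acts on $v$ as the diagonal scalar $(\alpha+n_0+m+k\beta)$ on each weight line $V_{h-m}$. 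Iterating with $\xi=d_{k_r}\cdots d_{k_1}$, each $k_j>M$, yields
\[
\xi(v\otimes t^{n_0})=\sum_{m}P_\xi(m)\,v_m\otimes t^{n_0+K},\qquad P_\xi(m)=\prod_{j=1}^{r}\bigl(\alpha+n_0+S_j+m+k_j\beta\bigr),
\]
with $S_j=k_1+\cdots+k_{j-1}$ and $K=\sum_j k_j$. A Vandermonde-style argument, varying the composition $(k_1,\ldots,k_r)$ for fixed $K$ and taking linear combinations of the resulting elements of $W$, shows that the evaluations $(P_\xi(0),\ldots,P_\xi(M))$ span $\C^{M+1}$, so each $v_m\otimes t^{n_0+K}$ lies individually in $W$ for $K$ sufficiently large. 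If $v_0\neq 0$ this already gives $u\otimes t^{n_0+K}\in W$; otherwise, isolate $v_{m_0}\otimes t^{n_0+K}\in W$ for some $m_0\geq 1$ with $v_{m_0}\neq 0$, use simplicity of $V(c,h)$ to pick $\eta\in U(\Vir_+)$ of degree $m_0$ with $\eta v_{m_0}=\lambda u$, $\lambda\neq 0$, and apply $\eta$; the twisted action produces correction terms only at $V$-weights $<h$, so the resulting vector has nonzero $u$-component and the Vandermonde step is rerun to extract $u\otimes t^{N}\in W$.

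Once $u\otimes t^N\in W$, the identity $d_k(u\otimes t^N)=(\alpha+N+k\beta)\,u\otimes t^{N+k}$, together with the standing hypotheses $0\leq\mathfrak{Re}\alpha<1$ and $\beta\neq 1$ (plus a direct check when $(\alpha,\beta)=(0,0)$ in $L'=L/L_0$), guarantees that $u\otimes t^M\in W$ for all $M\geq N_1$ for some $N_1$. The propagation to $V(c,h)\otimes t^M$ is then a clean induction on $-\deg P$ for $P\in U(\Vir_-)$: writing $P=d_{-k_1}P'$ with $\ell=-\deg P$, the relation
\[
d_{-k_1}\bigl(P'u\otimes t^{M+k_1}\bigr)=Pu\otimes t^M+(\alpha+M+\ell-k_1\beta)\,P'u\otimes t^M
\]
expresses $Pu\otimes t^M$ in terms of already-known elements of $W$, and the bound $M\geq N_1$ propagates through the induction unchanged.

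The principal obstacle is the Vandermonde step, where one must show that the family $\{P_\xi\}$ really spans $\C[m]_{\leq M}$ in each relevant regime. When $\beta\neq 0,1$ each factor depends nontrivially on its own $k_j$ and one has ample flexibility, but when $\beta=0$ the $k_j$-dependence enters only through the cumulative sums $S_j$, so more iterations are required and one must avoid sporadic vanishing of the factors $\alpha+n_0+m+k_j\beta$; the special case $(\alpha,\beta)=(0,0)$ in $L'=L/L_0$ needs additional bookkeeping to handle the fact that $u\otimes t^0$ is identified with zero in the quotient.
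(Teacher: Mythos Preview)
Your outline is sound and can be completed, but the paper takes a genuinely different route at the crucial first stage---isolating a single $V(c,h)$-homogeneous component inside $W$. Rather than a Vandermonde separation, the paper argues by \emph{minimality}: pick $w=\sum_{i=1}^r P_iu\in W_k$ with $r$ minimal over all nonzero weight vectors of $W$, set $l_i=-\deg P_i$, and for $m,n>l_r$ form
\[
(\alpha+k+l_j+\beta(m+n))\,d_md_n(w\otimes t^k)-(\alpha+n+k+l_j+\beta m)(\alpha+k+l_j+\beta n)\,d_{m+n}(w\otimes t^k),
\]
which kills the $j$-th summand. Minimality forces this element of $W$ to vanish, and equating coefficients of $m^2,mn,n^2,m,n,1$ yields $r=1$ immediately (the lone escape, $\beta=\alpha=0$ with $(k+l_i)(k+l_j)=0$, is excluded in $L'$). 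From the homogeneous $P_1u\otimes t^k\in W$ so obtained, the paper reaches $u$ by iterating positive $d_i$'s and invoking simplicity of $V(c,h)$, then fills out $V(c,h)\otimes t^m$ by exactly your downward induction on $-\deg P$. The minimality trick buys uniformity: two applications of $d_k$, no case split on $\beta$, and no apply-$\eta$-then-rerun detour. Your Vandermonde approach also goes through---for $\beta\ne 0$ the $P_\xi$ with large $r,K$ do span enough of $\C[x]$, and for $\beta=0$ the forced common factor $(\alpha+n_0+m)$ vanishes only at the index whose component is already zero in $L'$---but it costs precisely the case analysis you flag as the principal obstacle. The upside of your method is that it is constructive and may port more readily to settings where a clean two-term cancellation like the paper's is not available.
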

\begin{proof} Since $L'$ is a weight module, there exist subspaces
$W_n\subseteq V(c,h)$ for all $n\in\Z$ such that
$W=\bigoplus_{n\in\Z} W_n\ot t^n$. For any nonzero vector $w \in
W_n$ we can find homogeneous $P_i\in U(\Vir_-)$ such that
$$
w = \sum_{i=1}^r P_{i}u,\quad \text{where\ } 0 \geq \deg(P_1)>
\deg(P_2)
> \cdots > \deg(P_r).
$$ Choose $w$ with $r$ being minimal among all nonzero
elements in all $W_n, n\in\Z$. Suppose this $w\in W_k$. Denote
$l_i=-\deg(P_i).$ If $\a=\b=0$  we further assume that
$\deg(P_i)\neq k$, i.e., $l_i\neq -k$ for all $i=1,\cdots, r$ since
we are working within $L'$. For $m,n
> -\deg(P_r)$, we have
\begin{equation*}
d_md_n(w \otimes t^k) =\sum_{i=1}^r (\a+n+k+l_i+\b m)(\a+k+l_i+\b n) w_i \otimes t^{m+n+k}
\end{equation*}
\begin{equation*}
d_{m+n}(w \otimes t^k) =\sum_{i=1}^r (\a+k+l_i+\b(m+n)) w_i \otimes
t^{m+n+k}.
\end{equation*}
Thus for all $j: 1\le j\le r$ we have
\begin{equation*}
\begin{split}
&(\a+k+l_j+\b(m+n))d_md_n(w \otimes t^k)\\
&\hskip 1cm -(\a+n+k+l_j+\b m)(\a+k+l_j+\b n)d_{m+n}(w \otimes t^k)  \\
 =& \sum_{i=1}^r \Big((\a+k+l_j+\b(m+n))(\a+n+k+l_i+\b m)(\a+k+l_i+\b n)  \\
&\quad - (\a+n+k+l_j+\b m)(\a+k+l_j+\b n) (\a+k+l_i+\b(m+n))\Big)\\
&\hskip 1cm \cdot w_i \otimes t^{m+n+k}  \\
 =& \sum_{i=1}^r (l_i-l_j)\Big(\b^2(m^2+mn+n^2)+\b mn+ \b (2\a+2k+l_i+l_j)(m+n) \\
&\hskip2cm\quad +(\a+k+l_i)(\a+k+l_j)\Big)w_i \otimes t^{m+n+k}  \in
W.
\end{split}
\end{equation*}
By the minimality of $r$, we have
$$
\b^2(m^2+mn+n^2)+\b mn+ \b (2\a+2k+l_i+l_j)(m+n)$$ $$
+(\a+k+l_i)(\a+k+l_j)=0,
$$
for all $i \neq j$ and $m,n > l_r$. If $r>1$, then we can deduce
$\b=\a=0$ and
$(k+l_i)(k+l_j)=0$, impossible. 
As a result, we have $r=1$ and $w=P_1u \in W_k$ such that
$\deg(P_1)\neq k$ when $\a=\b=0$.

There exists $N_1 \in \Z$ such that $d_nP_1u=0$ and
$\a+k-\deg(P_1)+\b n \neq 0$ for any $n \geq N_1-k$. Thus we have
\begin{equation*}
d_n \cdot (P_1u \otimes t^{k})=(\a+k-\deg(P_1)+\b n) (P_1u \otimes
t^{n+k}) \in W
\end{equation*}
and hence $P_1u \in W_m$ for all $m \geq N_1$.

For any $i\in\N$ and $m\geq N_1$, we have $P_1u\in W_{m+i}$ and
\begin{equation*}
d_i \cdot (P_1u \otimes t^{m})=(d_i+\a+m-\deg(P_1)+\b i)(P_1u
\otimes t^{m+i}) \in W,
\end{equation*}
yielding  that $d_iP_1u\in W_{m+i}$ for all $m\geq N_1$.
Inductively, we can show $xP_1u \in W_{m+\deg(x)}$ for all
homogeneous $x \in U(\Vir_+)$ and $m \geq N_1$. Since $V(c,h)$ is an
irreducible $\Vir$-module, we can find some homogeneous $y\in
U(\Vir_+)$ such that $yP_1u=u$ and hence $u\in W_{m}$ for all $m\geq
N=N_1+\deg(y)$.

For any $i\in\N$ and $m\geq N$, we have 
\begin{equation*}
d_{-i} \cdot (u \otimes t^{m+i})=(d_{-i}+\a+m+i-\b i)(u \otimes
t^{m}) \in W
\end{equation*}
which implies that $d_{-i}u\in W_m$ for all $m\geq N$. Proceeding by
downward induction on $\deg(P), P\in U(\Vir_-)$ we can deduce that
$Pu\in W_m$ for all homogeneous $P\in U(\Vir_-)$ and $m\geq N$, that
is, $W_m=V(c,h)$ for all $m \geq N$, as desired.
\end{proof}

From the proof of the above lemma we can get the following
corollaries.

\begin{cor}\label{coro1} For any $n\in\N$, the submodule generated by all $V(c,h)\ot t^{k}, k>
n$ is the same as the submodule generated by all $u\ot t^k, k>n$. We
denote this submodule by $W^{(n)}$.
\end{cor}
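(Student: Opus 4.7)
The plan is as follows. One inclusion is immediate: since $u \in V(c,h)$, the submodule generated by $\{u \otimes t^k : k > n\}$ sits inside the submodule generated by $\{V(c,h) \otimes t^k : k > n\}$. Write $W$ for the submodule of the former kind. The content of the corollary is the reverse inclusion, which reduces to proving that $Pu \otimes t^k \in W$ for every homogeneous $P \in U(\Vir_-)$ and every integer $k > n$. I would observe that this is exactly what the final paragraph of the proof of Lemma \ref{submod} already establishes, with the single change that the absorption threshold ``$m \geq N$'' is replaced everywhere by the condition ``$k > n$''.

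To carry this out, I would induct on the length of $P$ (equivalently, on $-\deg(P)$). The base case $P \in \C$ is the definition of $W$. For the inductive step, factor $P = d_{-i}P'$ with $i \in \N$ and $P'$ strictly shorter, and apply the module structure on $L$ to obtain
\begin{equation*}
d_{-i}(P'u \otimes t^{k+i}) = d_{-i}P'u \otimes t^k + \bigl(\a + k + i - \deg(P') - i\b\bigr)(P'u \otimes t^k).
\end{equation*}
Since $k > n$ and $i \in \N$, both $k$ and $k+i$ exceed $n$, so the inductive hypothesis places $P'u \otimes t^{k+i}$ and $P'u \otimes t^k$ in $W$. Hence the left-hand side and the scalar multiple of $P'u \otimes t^k$ on the right both lie in $W$, forcing $Pu \otimes t^k = d_{-i}P'u \otimes t^k \in W$. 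Because $V(c,h) = U(\Vir_-)u$ is spanned by such elements $Pu$, this yields $V(c,h) \otimes t^k \subseteq W$ for every $k > n$.

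There is no real obstacle here beyond the bookkeeping above: the structural reason the argument closes with the sharp bound ``$k > n$'' rather than the weaker ``$k$ sufficiently large'' of Lemma \ref{submod} is that the condition $k > n$ is stable under the positive shift $k \mapsto k+i$, so every inductive step preserves the range of admissible weights without needing to enlarge any threshold. The only mild point to double-check is that we are free to ignore the scalar multiple on the right of the displayed identity — which is automatic once we know $P'u \otimes t^k$ itself lies in $W$ by the induction.
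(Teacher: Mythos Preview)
Your proof is correct and is exactly the argument the paper has in mind: the corollary is stated without proof precisely because the last paragraph of the proof of Lemma~\ref{submod} already contains the downward induction on $\deg(P)$ that you spell out, and you have correctly observed that replacing the threshold ``$m \geq N$'' there by ``$k > n$'' is harmless since the condition is preserved under the shift $k \mapsto k+i$. The only cosmetic remark is that ``length of $P$'' and ``$-\deg(P)$'' are not literally the same quantity, but either serves as a valid induction parameter here, and your handling of general $P$ by linearity at the end is fine.
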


\begin{cor}\label{coro2}
$L'$ is simple if and only if it is generated by $u \otimes t^k$ for
some sufficiently large 
$k \in \N$.
\end{cor}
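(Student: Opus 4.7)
The plan is to read this corollary as a direct consequence of Lemma \ref{submod}, with the forward direction being essentially tautological and the reverse direction handled by contrapositive.

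For the forward direction, if $L'$ is simple then every nonzero vector generates the whole module. Since $u \otimes t^k$ is a nonzero weight vector for every $k \in \Z$, it generates $L'$, and in particular the statement ``$L'$ is generated by $u \otimes t^k$'' holds for arbitrarily large $k \in \N$.

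For the reverse direction I would proceed by contrapositive. Assume $L'$ is not simple and pick a proper nonzero submodule $W \subsetneq L'$. Apply Lemma \ref{submod} to $W$ to obtain an integer $N$ such that $V(c,h) \otimes t^n \subseteq W$ for all $n \geq N$; in particular $u \otimes t^n$ lies in the proper submodule $W$ for every $n \geq N$. Therefore the cyclic submodule generated by $u \otimes t^n$ is contained in $W$ and is itself a proper submodule of $L'$ for every such $n$. This shows $u \otimes t^k$ cannot generate $L'$ for any sufficiently large $k$, which is precisely the contrapositive of the statement, and also clarifies the role of ``sufficiently large'' in the hypothesis: the uniformity coming from Lemma \ref{submod} forces $u \otimes t^k$ into every proper submodule once $k$ is large enough, so a large-index generator of $L'$ cannot coexist with a proper submodule.

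There is no real obstacle here; the only conceptual step is to notice that the single conclusion of Lemma \ref{submod}, namely that every nonzero submodule eventually contains $V(c,h) \otimes t^n$ for $n \geq N$, immediately encodes the equivalence. Lemma \ref{span} is used implicitly to assure us that $u \otimes t^k$ is indeed a nonzero weight vector of $L'$, so that the cyclic submodule it generates is a well-defined nonzero object to compare with $L'$.
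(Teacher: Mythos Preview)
Your argument is correct and is exactly the intended reading: the paper states this corollary without proof, saying only that it follows ``from the proof of the above lemma,'' and your contrapositive use of Lemma~\ref{submod} is precisely that deduction. One tiny caveat: in the case $(\alpha,\beta)=(0,0)$ the element $u\otimes t^0$ is zero in $L'$, so ``$u\otimes t^k$ is nonzero for every $k\in\Z$'' is not literally true, but since you only need this for large $k$ the argument is unaffected.
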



Now we introduce our linear map  $\tp_n: M(c,h) \rightarrow \C$ for
$n\in\Z$. Denote by $T(\Vir_-)$ the tensor algebra of $\Vir_-$,
i.e.,
\begin{equation*}
T(\Vir_-) = \C \oplus (\Vir_-) \oplus (\Vir_- \otimes \Vir_-) \oplus \cdots ({\Vir_-}^{\otimes k}) \oplus \cdots
\end{equation*}
with multiplication being the tensor product, which is a free
associative algebra. For any $k \in \N$, define $\deg(d_{-k}) = -k$
and $\deg(1) = 0$, then $T(\Vir_-)$ becomes a $\Z_-$-graded algebra.
Now for any $n \in \Z$ and $\a,\b \in \C$, we can inductively define
a linear map $\phi_n: T(\Vir_-) \rightarrow \C$ as follows:
\begin{equation}\label{def_phi_n}
\quad \phi_n(1)=1,\quad \phi_n(d_{-k}P) = -(\a+n+k-\deg(P)-k \b)
\phi_n(P),
\end{equation}
for any homogeneous element $P \in T(\Vir_-)$. Notice that
$U(\Vir_-)$ is just the quotient $T(\Vir_-)/J$ where $J$ is the
two-sided ideal generated by
$d_{-i}d_{-j}-d_{-j}d_{-i}-(i-j)d_{-i-j}$ for $i,j \in \N$. Now we
will show $J \subseteq \ker(\varphi_n)$. By the definition of
$\phi_n$, it is enough to show
\begin{equation*}
\phi_n((d_{-i}d_{-j}-d_{-j}d_{-i}-(i-j)d_{-i-j})P) =0
\end{equation*}
for any homogeneous element $P \in T(\Vir_-)$. Let $m=\deg(P)$, we
have
\begin{equation*}
\begin{split}
&\phi_n\Big((d_{-i}d_{-j}-d_{-j}d_{-i})P\Big)\\
=& (n-m+j+i+\a-i \b)(n-m+j+\a-j \b)\phi_n(P) \\
& \quad -(n-m+i+j+\a-j \b)(n-m+i+\a-i \b)\phi_n(P) \\
= & (j-i)(n-m+i+j+\a-(i+j)\b)\phi_n(P)  \\
= & (i-j) \phi_n(d_{-i-j}P).
\end{split}
\end{equation*}
Thus $\phi_n$ induces a linear map on $U(\Vir_-)$, still denoted by
$\phi_n$. Since the Verma module $M(c,h)$ is free of rank $1$ as a
$U(\Vir_-)$-module, we can define the linear map
\begin{equation*}
\tp_n: M(c,h) \rightarrow \C, \quad \tp_n(P u) = \phi_n(P), \
\forall\ P \in U(\Vir_-).
\end{equation*}

\begin{lem} \label{mod_W}
Let $W$ be a $\Vir$-submodule of $L'$ with $W\supseteq W^{(n)}$ for
some $n\in\Z$. Then  $$Pu \otimes t^n \equiv \phi_n(P)u \otimes t^n\
\ (\hskip-.4cm \mod W), \ \forall\ P \in U(\Vir_-).$$
\end{lem}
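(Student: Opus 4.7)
The plan is to prove the congruence by induction on the degree of $P$, working throughout modulo $W$. Since $\phi_n$ is linear and both sides of the asserted congruence depend linearly on $P\in U(\Vir_-)$, it suffices to handle PBW monomials $P=d_{-k_r}\cdots d_{-k_1}$; the base case $P=1$ is immediate because $\phi_n(1)=1$.

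For the inductive step, write $P=d_{-k}P'$ with $k\in\N$ and $\deg(P')=-m$. The crucial observation is that $P'u\otimes t^{n+k}$ lies in $W$: indeed $n+k>n$, so $u\otimes t^{n+k}\in W^{(n)}\subseteq W$, and by Corollary~\ref{coro1} the whole of $V(c,h)\otimes t^{n+k}$, in particular $P'u\otimes t^{n+k}$, belongs to $W^{(n)}\subseteq W$. Therefore $d_{-k}(P'u\otimes t^{n+k})\in W$ as well. Expanding this with the action formula
\[
d_{-k}(P'u\otimes t^{n+k})=d_{-k}P'u\otimes t^{n}+(\a+n+k-\deg(P')-k\b)\,P'u\otimes t^{n},
\]
we obtain
\[
d_{-k}P'u\otimes t^{n}\equiv -(\a+n+k-\deg(P')-k\b)\,P'u\otimes t^{n}\pmod{W}.
\]

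Applying the inductive hypothesis to $P'$, the right-hand side becomes $-(\a+n+k-\deg(P')-k\b)\phi_n(P')\,u\otimes t^{n}$, which by the defining recursion \eqref{def_phi_n} equals exactly $\phi_n(d_{-k}P')\,u\otimes t^{n}$. This completes the induction.

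I don't expect a serious obstacle here: the recursion \eqref{def_phi_n} has been set up precisely to mirror the coefficient that appears when $d_{-k}$ acts on $P'u\otimes t^{n+k}$ inside $L$, so once one checks that the "error term" $P'u\otimes t^{n+k}$ is absorbed by $W$ (which is the role of Corollary~\ref{coro1}), the argument reduces to matching coefficients. The only thing to watch is that one works with a PBW-monomial form where a leftmost factor $d_{-k}$ can be peeled off, and that the well-definedness of $\phi_n$ on $U(\Vir_-)$ (already verified before the lemma) lets us assemble the monomial identities into the statement for arbitrary $P$.
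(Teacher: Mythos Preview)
Your proof is correct and follows essentially the same approach as the paper: both argue by induction on the degree of $P$, using that $P'u\otimes t^{n+k}\in W^{(n)}\subseteq W$ so that applying $d_{-k}$ and expanding via the action formula yields exactly the recursion defining $\phi_n$. The only cosmetic difference is that you reduce to PBW monomials up front while the paper inducts over arbitrary homogeneous elements, but the computations are identical.
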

\begin{proof} Let $Q \in U(\Vir_-)$ be a homogeneous element and suppose that the result holds for this $Q$.
Then for any $k \in \N$, we have
\begin{equation*}
d_{-k}\Big(Qu \otimes
t^{n+k}\Big)=\Big((d_{-k}+\a+n+k-\deg(Q)-k\b)Qu\Big) \otimes t^n\in
W,
\end{equation*}
which indicates
\begin{equation*}
\begin{split}
(d_{-k}Qu) \otimes t^n &\equiv -(\a+n+k-\deg(Q)-k\b)(Qu) \otimes t^n  \\
&\equiv  -(\a+n+k-\deg(Q)-k\b)\phi_n(Q)u \otimes t^n  \\
&\equiv \phi_n(d_{-k}Q)u\otimes t^n\ \ (\hskip-.4cm \mod W).
\end{split}
\end{equation*}
The lemma follows from induction on $\deg(Q)$ and linearity of
$\phi_n$.
\end{proof}

Let $J(c,h)$ be the maximal submodule of $M(c,h)$. By  \cite{FF}, we
can assume that $J(c,h)$ is generated by two singular vectors $Q_1$
and $Q_2$ for some homogeneous $Q_1, Q_2\in U(\Vir_-)$. We have made
the assumption that $Q_1=Q_2$ if $J(c,h)$ can be generated by a
singular vector and $Q_1=Q_2=0$ if $M(c,h)$ itself is simple. Under
this assumption, $Q_1$ and $Q_2$ are both homogeneous and unique up
to nonzero scalars.
%
%
%
%
%
Now we can give the proof of Theorem \ref{simplicity}.

\noindent {\it \textbf{Proof of Theorem 1.}} Recall that  $\b\ne 1$.
Let $W$ be a nonzero $\Vir$-submodule of $L'\cong V(c,h) \otimes
V^\prime_{\a,\b}$. By Lemma \ref{submod} we know that there is $N
\in \Z$ such that $V(c,h) \otimes t^k \subseteq W$ for all $k>N$.
By Lemma \ref{mod_W} we have
\begin{equation}
Q_iu \otimes t^N- \phi_N(Q_i)u \otimes t^N \in W,\quad i=1, 2.
\end{equation}

``If part of (a).'' By the hypothesis, 
we have $\phi_N(Q_1) \neq 0$ or $\phi_N(Q_2) \neq 0$, we get $u
\otimes t^N \in W$. Inductively, we get $u \otimes t^k \in W$ for
all $k \in \Z$. By Lemma \ref{span}, we have $W = L'$, that is, $L'$
is simple.

``Only if part of (a).'' Now we assume that the equations
$\phi_n(Q_1)=\phi_n(Q_2)=0$ has at least one solution for $n$
(nonzero if $(\a,\b)=(0,0)$).

We first consider the case that $\phi_n(Q_1)=\phi_n(Q_2)=0$  has
only finitely many such solutions and prove (b). If
$(\a,\b)\ne(0,0)$, let $N_0$ be the largest integer  such that
$\phi_n(Q_1)=\phi_n(Q_2)=0$. Similarly as in the ``if part", we get
that $u\ot t^k\in W$ for all $k>N_0$ by induction. If
$(\a,\b)=(0,0)$, let $N_0$ be the largest \textbf{nonzero} integer
such that $\phi_n(Q_1)=\phi_n(Q_2)=0$. Since $u\ot t^0=0$ is already
in $W\subseteq L'$, again we can get $u\ot t^k\in W$ for all
$k>N_0$. Recall that $W^{(N_0)}$ is the $\Vir$-submodule of $L'$
generated by all $V(c,h) \otimes t^k, k> N_0$. In both cases we
conclude that $W\supseteq W^{(N_0)}$ from Corollary 6, implying that
$W^{(N_0)}$ is the smallest submodule of $L'$ which has to be
simple. Since some weight spaces of $W^{(N_0)}$ are
infinite-dimensional, from \cite{MZ1} we know that any nonzero
weight spaces are infinite-dimensional. Thus (b) follows.

Now we go back to the proof of (a).
Let $N$ be any (nonzero if $(\a,\b)=(0,0)$) integer such that
$\phi_N(Q_1)=\phi_N(Q_2)=0$. We will show that $W^{(N)}$ is a proper
submodule of $L'$. By the definition of $\phi_N$ in
\eqref{def_phi_n}, we obtain that
\begin{equation*}\begin{split}
\tp_N(J(c,h)) & =\tp_N\Big(U(\Vir_-)Q_1u+U(\Vir_-)Q_2u\Big)\\
&=\phi_N\big(U(\Vir_-)Q_1\big) +\phi_N\big(U(\Vir_-)Q_2\big)=0.
\end{split}\end{equation*} Thus $\tp_N$ induces a linear map $V(c,h)
\rightarrow \C$ which sends $P u$ to $\phi_N(P)$ for any $P \in
U(\Vir_-)$.
Since $\tp_N(u)=1$ and hence $\ker(\tp_N)\ne V(c,h)$.


First, by the PBW theorem, the weight space of $W^{({N})}$ with
weight $\a+h+{N}$ is
\begin{equation*}\begin{split}
W^{({N})}_{N} \ot t^{N} & = \sum_{k\in\N} d_{-k} \Big( V(c,h)
\otimes
t^{{N}+k} \Big)\\
= & \sum_{k \in\N}\sum_{P\in U(\Vir_-)}
\Big((d_{-k}+\a+{N}+k-\deg(P)-k\b) Pu\Big) \otimes t^{N},
\end{split}\end{equation*} or in other words,
\begin{equation*}
W^{({N})}_{N}  = \sum_{k \in\N}\sum_{P\in U(\Vir_-)}
\Big((d_{-k}+\a+{N}+k-\deg(P)-k\b) Pu\Big),
\end{equation*}
where the second summation is taken over all homogeneous $P$. 
\\ On the other hand, for any homogeneous element $P$, 
we have
\begin{equation*}\begin{split}
& \tp_{N} \Big((d_{-k}+\a+{N}+k-\deg(P)-k\b)Pu\Big)\\
= & \phi_{N}(d_{-k}P)+(\a+{N}+k-\deg(P)-k\b)\phi_N(P)=0.
\end{split}\end{equation*}
That is $W^{({N})}_{N} \subseteq \ker(\tp_{N})$, as desired. By
Lemma \ref{mod_W} we see that actually $W^{({N})}_{N}
=\ker(\tp_{N})$. Thus $W^{(N)}$ is a proper submodule of $L'$, i.e.,
$L'$ is not simple.\qed
Let $\Phi$ be the set of all (nonzero if $(\a,\b)=(0,0)$)
integers $n$ such that $\phi_n(Q_1)=\phi_n(Q_2)=0$. From the above
theorem, we can have the following comments on the submodules of
$L'$.

\begin{rem}\label{remark}
(1) We have a sequence of submodules of $L'$:
\begin{equation}\label{sequence}
\cdots \subseteq W^{(n+1)}\subseteq W^{(n)} \subseteq W^{(n-1)}
\subseteq \cdots
\end{equation}

If $Q_1=Q_2=0$, that is, $V(c,h)=M(c,h)$, then $\Phi=\Z$ except that
$\Phi=\Z\setminus\{0\}$ when $(\a,\b)=(0,0)$ and all inclusions
$W^{(n+1)} \subseteq W^{(n)}$ are proper for $n\in\Phi$. It is easy
to see that $W^{(n)}/W^{(n-1)}$ is a highest weight module of
highest weight $\a+h+n$ for any $n\in\Phi$. In particular $V(c,h)\ot
V'_{\a,\b}$ is an extension of countably many highest weight
modules.

If $Q_1 \neq 0$ or $Q_2 \neq 0$, then $\Phi$ is a finite subset of
$\Z$ and the only proper inclusions in \eqref{sequence} are
$W^{(n)}\subsetneq W^{(n-1)}$ for $ n\in\Phi$. If we write
$\Phi=\{n_1,n_2,\ldots,n_r\}$ with $n_1 < n_2 <\cdots <n_{r}$ and
take any $n_0<n_1$, then \eqref{sequence} can be simplified as:
\begin{equation}\label{sequence-finite}
0 \subsetneq W^{(n_r)} \subsetneq \cdots \subsetneq W^{(n_{2})}
\subsetneq W^{(n_1)} \subsetneq W^{(n_0)}=L'.
\end{equation}
We have known that $W^{(n_r)}$ is the unique minimal nonzero proper
submodule of $L'$. By replacing $N$ with $n_i, 1\leq i\leq r$ in the
proof of Theorem \ref{simplicity}, we see that
$$W^{(n_{i-1})}_{n_i}=V(c,h)=W^{(n_i)}\oplus (\C u\ot t^{n_i}),$$
and $W^{(n_{i-1})}$ is generated by $W^{(n_i)}$ and $u\ot t^{n_i}$.
This implies that $W^{(n_{i-1})}/W^{(n_i)}$ is a highest weight
module with highest weight $\a+h+n_i$. Then we can obtain that
$W^{(n_r)}$ is a simple weight $\Vir$-module with infinitely
dimensional weight spaces.

(2) Recall that $V_{\a,0}\cong V_{\a,1}$ for $\a\neq 0$ and in this
case they correspond to the same module $L'$ while different linear
map $\phi_n$, to distinguish them we denote by $\phi_n^{(\a,0)}$ and
$\phi_n^{(\a,1)}$ respectively.
Then for any homogeneous element $P \in U(\Vir_-)$, we have
\begin{equation*}
\varphi_n^{(\a,0)}(P) = \frac{\a + n-\deg(P)}{\a+n}
\varphi_n^{(\a,1)}(P),
\end{equation*}
which results in the same $\Phi$ and hence the same sequence of
submodules of $L'$ in \eqref{sequence-finite}.
\end{rem}


\section{Isomorphism} \label{isomorphisms}
In this section we will prove Theorem 2 and compare the tensor
products  $V(c,h) \otimes V^\prime_{\a,\b}$ with other known simple
weight modules.

 \noindent{\it \textbf{Proof of Theorem 2.}}
Fix any complex numbers $c ,h, c_1, h_1, \a, \b, \a_1, \b_1$ with $0
\leq \mathfrak{Re} \a, \mathfrak{Re} \a_1 <1$,
  $\b\ne 1$ and $\b_1\ne 1$. Let $V$ (resp. $V_1$) be a highest weight module
generated by highest weight vector $u$ (resp. $u_1$) with highest
weight $(c,h)$ (resp. $(c_1, h_1)$).
As we did in Section \ref{section_simlicity}, we will identify $V\ot
V_{\a,\b}$ and $V_1\ot V_{\a_1, \b_1}$ with $L=V\ot\C[t^{\pm1}]$ and
$L_1=V_1\ot\C[t^{\pm1}]$ respectively via \eqref{identify} and we
also identify $V\ot V'_{\a,\b}$ and $V_1\ot V'_{\a_1, \b_1}$ with
$L'$ and $L_1'$ respectively.

The ``if part" is trivial. We now
prove the ``only if part". 
It is clear that $c=c_1$.

Now assume $\psi: L '
\longrightarrow
L'_1$ is an isomorphism of $\Vir$-modules. Fix any $k\in\Z$ such
that $k\neq0$ when $(\a,\b)=(0,0)$.
Since $\psi(u \otimes t^k)$ and $u \otimes t^k$ are of the same
weight, so $\psi(u \otimes t^k)=w \otimes t^l$ for some nonzero
vector $w=\sum\limits_{i=0}^{s}w_i$ with $w_i\in\C P_iu_1$,
$\deg(P_i)=-i$ and $\a+h+k=\a_1+h_1+l$. We may assume that
$\a_1+l+i\ne0$ if $\b_1=0$.  For any $m,n\geq s+1$, we have
\begin{equation*}
\begin{split}
& \psi\big(d_md_n(u \otimes t^k)\big) = (\a+k+n+\b m)(\a+k+\b n)
\psi(u \otimes t^{m+n+k}) \\
= & d_md_n(w \otimes t^l)\\ = &\sum_{i=0}^s (\a_1 +l+i+n+\b_1
m)(\a_1+l+i+\b_1 n) w_i \otimes t^{m+n+l}
\end{split}
\end{equation*}
and
\begin{equation*}
\begin{split}
& \psi\big(d_{m+n}(u \otimes t^k)\big)=(\a+k+\b(m+n)) \psi(u \otimes t^{m+n+k}) \\
= & d_{m+n}(w \otimes t^l) =\sum_{i=0}^s (\a_1+l+i+\b_1(m+n)) w_i
\otimes t^{m+n+l}.
\end{split}
\end{equation*}
This implies
\begin{equation*}
\begin{split}
& (\alpha+k+\beta(m+n))(\alpha_1 +l+i+n+\beta_1 m)(\alpha_1+l+i+\beta_1 n)\\
= & (\alpha_1+l+i+\beta_1(m+n))(\alpha+k+n+\beta m)(\alpha+k+\beta
n),
\end{split}
\end{equation*}
for all $m,n\geq s+1$ and $0\leq i\leq s$ with $w_i\neq 0$. Rewrite
it as a polynomial of $m$ and $n$ as follows
\begin{equation*}
\begin{split}
& \b \b_1 (\b_1-\b)mn(m+n) + \b \b_1(\a_1+l+i-\a-k)(m^2+n^2)\\
+ & \Big((\a+k)\b_1(\b_1-2\b-1)+(\a_1+l+i)\b(1+2\b_1-\b)\Big)mn \\
+ & \Big((\a+k)(\a_1+l+i)(\b_1-\b)+\b (\a_1+l+i)^2-\b_1(\a+k)^2\Big)(m+n) \\
+ &(\a+k)(\a_1+l+i)(\a_1+l+i-\a-k)=0,
\end{split}
\end{equation*}
which holds for all $i$ with $w_i \neq 0$ and $m,n > s$. Hence we
get
\begin{enumerate}
\item \label{coeff1} $(\a+k)(\a_1+l+i)(\a_1+l+i-\a-k)=0$,
\item \label{coeff2} $\Big(\b (\a_1+l+i)+\b_1 (\a+k)\Big) (\a_1+l+i-\a-k)=0$,
\item \label{coeff3} $\b \b_1(\a_1+l+i-\a-k) =0$,
\item \label{coeff4} $(\a+k)\b_1(\b_1-1)=(\a_1+l+i)\b(\b-1)$,
\item \label{coeff5} $\b \b_1 (\b_1-\b) =0$.
\end{enumerate}



If $\b=0$ or $\b_1=0$, from the above formulas and the assumption
that $0 \leq \mathfrak{Re} \a, \mathfrak{Re} \a_1 <1$,
  $\b\ne 1$ and $\b_1\ne 1$ we can easily deduce that $\b=\b_1=0$ and $\a=\a_1$. Next we
assume that $\b\b_1\ne0$. Again, from the above formulas and the
assumption that $0 \leq \mathfrak{Re} \a, \mathfrak{Re} \a_1 <1$,
  $\b\ne 1$ and $\b_1\ne 1$ we can easily deduce that $\b=\b_1$ and $\a=\a_1$.
In all cases we can deduce that  $k=l+i$ for all $w_i\neq0$, which
implies that there is only one integer, say $r$ such that
$w_r\neq0$. Now we get $\psi(u \otimes t^k)= P_ru_1 \otimes
t^{k-r}$, where $r = h_1-h$ since $\a+h+k=\a_1+h_1+l$. We will
replace $P_r$ with  $P_{r,k}$ since it also depends on $k$. Note
that
$$\sum\limits_{k\in
\Z}U(\Vir_-)(u \otimes t^k) = V \otimes V'_{\a,\b}\ {\rm for\ }
(\a,\b) \neq (0,0)$$ and $$\sum\limits_{k\in
\Z\setminus\{0\}}U(\Vir_-)(u \otimes t^k) = V \otimes V'_{\a,\b}
{\rm\ for\ } (\a,\b) = (0,0),$$ which together with the fact that
$\psi$ is an isomorphism implies that
$$\sum\limits_{k\in
\Z}U(\Vir_-)(P_{r,k}u_1 \otimes t^{k-r}) = V_1 \otimes
V'_{\a_1,\b_1}\ {\rm for\ } (\a,\b) \neq (0,0)$$ and
$$\sum\limits_{k\in \Z\setminus\{0\}}U(\Vir_-)(P_{r,k}u_1 \otimes
t^{k-r}) = V_1 \otimes V'_{\a_1,\b_1} {\rm\ for\ } (\a,\b) =
(0,0).$$
 The above equalities force that $P_{r,k}u_1$ is a nonzero scalar multiple of
 $u_1$. Thus  $r=0$, that is, $h=h_1$ and there exist $c_k\in\C^*$
 such that
$\psi(u \otimes t^k)=c_k u_1 \otimes t^k$ for all $k\in\Z$ but not
zero if $(\a,\b)=(0,0)$.

For any $n \in \N$, we have
\begin{equation*}
\begin{split}
c_{n+k}(\a+k+\b n) u_1 \otimes t^{n+k} & = \psi((\a+k+\b n) u \otimes t^{n+k}) = \psi(d_n(u \otimes t^k)) \\
&=d_n(\psi(u \otimes t^k)) = c_kd_n(u_1 \otimes t^k) \\
&= c_k(\a+k+\b n)u_1 \otimes t^{n+k}.
\end{split}
\end{equation*}
If $\a+k+\b n \neq 0$, we get $c_{n+k}= c_k$. Now for any (nonzero
if $(\a,\b)=(0,0)$) $k,l \in \Z$, choosing a sufficiently large $n
\in \N$ such that
\begin{equation*}
n+k\neq0,\ \a+k+\b n \neq 0 \text{\ and\ } \a+l+\b(n-l+k) \neq 0,
\end{equation*}
then we have $c_k =c_{n+k}$ and $c_l = c_{(n-l+k)+l}=c_{n+k}=a$ for
some $a \in \C^*$. By replacing $\psi$ with its multiple we may
assume that $\psi(u \otimes t^k)=u_1 \otimes t^k$ for all $k\in\Z$
but not zero if $(\a,\b)=(0,0)$. Suppose $X$ be the maximal subspace
of $U(\Vir_-)$ such that
$$\psi((P u) \otimes t^k) = (P u_1) \otimes t^k, \ \forall\ P \in
X, \ \forall k\in\Z.$$ We can easily see that $X=U(\Vir_-)$ and the
map $V\to V_1$ defined by $Pu\to Pu'$ is a module isomorphism.  So
$V \cong V_1$.\qed

Now we compare the tensor products  $V(c,h) \otimes
V^\prime_{\a,\b}$ with other known simple weight modules. So far
known simple weight Virasoro modules are from \cite{CM, LLZ}. It is
proved  in \cite{LLZ} that $V(c,h) \otimes V^\prime_{\a,\b}$ (or its
minimal submodule) is not isomorphic to any module in \cite{LLZ}.
Next we compare $V(c,h) \otimes V^\prime_{\a,\b}$ with simple weight
modules from \cite{CM}.

\begin{thm} Let $c,h,\a,\b, a, b, \gamma, p\in\C$.
  Then $V(c,h) \otimes V^\prime_{\a,\b}$ (or
its minimal submodule) is not isomorphic to any irreducible module
$E_a(b, \gamma, p)$ defined in \cite{CM}.
\end{thm}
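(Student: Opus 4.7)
The plan is to argue by contradiction and exhibit a structural invariant separating the two families. Suppose $\psi \colon M \to E_a(b,\ga,p)$ is a $\Vir$-module isomorphism, where $M$ denotes either $V(c,h) \otimes V'_{\a,\b}$ itself or, in the non-simple case, its unique minimal submodule $W^{(n_r)}$ from Remark \ref{remark}. I identify $M$ with its avatar inside $L'$ from Section \ref{section_simlicity}, so that every weight vector $w \in L'$ of weight $\a+h+k$ has a canonical form
\begin{equation*}
w = \sum_{i=1}^r P_i u \otimes t^k, \qquad P_i \in U(\Vir_-)\ \text{homogeneous of pairwise distinct degrees},
\end{equation*}
and the length $r$ is intrinsic.

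The key one-sided property of the tensor product is then extracted from the explicit action
\begin{equation*}
d_n(P_i u \otimes t^k) = \bigl(d_n P_i u + (\a + k - \deg P_i + n\b)\,P_i u\bigr) \otimes t^{k+n}.
\end{equation*}
Because $\b \neq 1$, each affine factor $\a+k-\deg P_i + n\b$ vanishes for at most one $n$, and the $d_n P_i u$ term strictly increases $\deg$ in the $U(\Vir_-)$-filtration. Iterating, no nonzero weight vector $w \in L'$ can satisfy $d_n w = 0$ for all $n \geq N$, and moreover the orbit $\{d_n w : n \geq N\}$ cannot lie inside any finite-dimensional subspace of $L'$: indeed, a grading argument identical to the one carried out in the proof of Lemma \ref{submod} (comparing $d_m d_n w$ with $d_{m+n} w$ and reading off coefficients of a two-variable polynomial in $m,n$) forces $r=0$, a contradiction.

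Second, I would invoke the explicit construction of $E_a(b,\ga,p)$ from \cite{CM}: these modules carry a distinguished cyclic weight vector $v_0$ on which the operators $d_n$, for all $n$ beyond some threshold, act through a closed-form formula depending polynomially on $n$ with coefficients determined by $(a,b,\ga,p)$. In particular, $d_n v_0$ lies in a fixed finite-dimensional subspace of $E_a(b,\ga,p)$ for all $n \gg 0$. Transporting $v_0$ back along $\psi^{-1}$ produces a weight vector $w^* \in M \subseteq L'$ whose $d_n$-orbit (for $n \gg 0$) spans only finitely many dimensions, in direct conflict with the conclusion of the previous paragraph. Hence no such $\psi$ can exist.

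The main obstacle is pinning down the precise \emph{bounded-orbit} formula for $v_0$ in \cite{CM} and confirming that its putative preimage $w^*$ forces a polynomial identity in two variables $m,n$ (from equating $\psi^{-1}(d_m d_n v_0)$ with the formula above applied to $w^*$) that cannot be satisfied under $0 \le \mathfrak{Re}\,\a < 1$ and $\b \neq 1$. This is essentially the same polynomial-comparison technique that opens the proof of Lemma \ref{submod}, so no new analytic ingredient is needed; the minimal submodule case is handled uniformly, because the length invariant $r$ is inherited from the ambient $L'$ and the argument never requires $w^*$ to be of the special form $u \otimes t^k$.
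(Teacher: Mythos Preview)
Your separating invariant does not work. Both $E_a(b,\gamma,p)$ and $L'$ are weight modules, so for any weight vector $v_0$ the images $d_n v_0$ for distinct $n$ lie in pairwise distinct weight spaces and are automatically linearly independent whenever nonzero. The assertion that ``$d_n v_0$ lies in a fixed finite-dimensional subspace of $E_a(b,\gamma,p)$ for all $n\gg0$'' is therefore equivalent to $d_n v_0=0$ for cofinitely many $n$, which is not what the construction in \cite{CM} gives. Symmetrically, your claim that $\{d_n w : n\ge N\}$ spans an infinite-dimensional subspace of $L'$ is true for the same trivial weight reason, so it cannot distinguish $L'$ from $E_a(b,\gamma,p)$. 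The Lemma~\ref{submod} computation you invoke does something different: it reduces a vector \emph{inside a given submodule} to length $r=1$, not $r=0$; in particular $w=P_1u\otimes t^k$ survives and satisfies exactly the kind of polynomial-in-$(m,n)$ relation between $d_md_nw$ and $d_{m+n}w$ that you were hoping to exclude.

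The paper repairs this by replacing $d_n$ with the degree-zero elements $Q_n=d_0^2+nd_0-d_{-n}d_n\in U(\Vir)$. Since each $Q_n$ preserves every weight space, the number $\dim\spn\{Q_n w : n\in\N\}$ is a genuine module invariant. For every $w\in E_a(b,\gamma,p)$ this dimension is finite (this is recorded in \cite{CM}), whereas $Q_n(u\otimes v_k)$ contains the term $(d_{-n}u)\otimes v_{k+n}$, giving infinitely many independent vectors in a single weight space of $V(c,h)\otimes V'_{\a,\b}$ (and of its minimal submodule, which contains all $u\otimes v_k$ for $k$ large). That contrast is what actually separates the two families; your outline would become correct if you replaced the $d_n$-orbit by the $Q_n$-orbit throughout.
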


\begin{proof} Let us recall the Casimir operators
$Q_n=d_0^2+nd_0-d_{-n}d_n\in U(\Vir)$ for any $n\in\N$. From Page
174 on \cite{CM}, we know that for any $w\in E_a(b, \gamma, p)$,
$$\dim{\text{span}}\{Q_nw\,\,|\,\,n\in\N\}<\infty.$$
At the same time, in $V(c,h) \otimes V^\prime_{\a,\b}$ (or its
minimal submodule) we have
$$\dim{\text{span}}\{Q_n(u\otimes v_k)\,\,|\,\,n\in\N\}=\infty,$$
for any $k\in \Z$ with $v_k\ne0$. The theorem follows.
\end{proof}


\section{Examples}

In this section, to illustrate our Theorem \ref{simplicity} we will
present some examples for special values of $c,h$ and $\a,\b$.
Notations are as in Section \ref{section_simlicity}. To distinguish
different modules relative to different $\a,\b$ we will add
subscripts for certain modules, such as $W_{\a,\b}^{(n)}$ and so on,
if necessary. Recall that $0\leq \Re(\a)<1$ and
$\b\neq 1$. 

\begin{exa} We first give some examples such that the unique maximal submodule $J(c,h)$ of the corresponding
Verma module $M(c,h)$ can be generated by only one singular vector.
We point out that the expression for $h$ at the very beginning in
Sect.5 in \cite{A} should be $h=\frac{m^2-(p+q)^2}{4pq}$.

(I). Let $(c, h)=(1, 0)$. In this case we take $p=-q=1$ and $m=0$ in
Sect.5 of \cite{A}.  From Theorem A in \cite{A} we get that
$J(1,0)=U(\Vir_-)Qu$,\\ where $Q=d_{-1}$. It is clear that
$\phi_n(Q)=(\b-\a-n-1)$. From Theorem 1 we see that
$V(1,0)\ot V'_{\a,\b}$ is simple if and only if $\b-\a \notin\Z$. 
Moreover, if $\b - \a \in \Z$, then $V(1,0)\ot V'_{\a,\b}$ has a
unique simple submodule $W_{\a,\b}^{(\b-\a-1)}$ and $\left(V(1,0)\ot
V'_{\a,\b}\right)/W_{\a,\b}^{(\b-\a-1)}$ is a highest weight module
of highest weight $(1,\b-1)$. It is easy to see that similar result
holds for any module $V(c,0)$ provided that the corresponding
maximal submodule $J(c,0)$ is generated by $d_{-1}u$ (see Example
14).

(II). Let $(c, h)=(1, -\frac{1}{4})$. In this case we take $p=-q=1$
and $m=1$ in Sect.5 of \cite{A}. Then we see that $J(1,
-\frac{1}{4})$ is generated by only one singular vector of degree
$2$. By direct computation we can deduce that $J(1,
-\frac{1}{4})=U(\Vir_-)Qu$ where $Q=d_{-1}^2+d_{-2}$. We have
$$
\phi_n(Q)=(\b-\a-n-2)(\b-\a-n-1)+(2\b-\a-n-2).
$$
Solving the equation $\phi_n(Q)=0$ for $n$,  we get
\begin{equation*}
n_1=\b-\a-1 + \sqrt{1-\b}, \; n_2=\b-\a-1 - \sqrt{1-\b}.
\end{equation*}

(1). $V(1,-\frac{1}{4})\ot V'_{\a,\b}$ is simple if $n_1,n_2 \notin
\Z$.

(2). If for some $\a,\b$, the numbers $n_1 \in \Z$ (resp. $n_2 \in
\Z$) and $n_2 \notin \Z$ (resp. $n_1 \notin \Z$), then
$V(1,-\frac{1}{4})\ot V'_{\a,\b}$ has a unique simple submodule
$W_{\a,\b}^{(n_1)}$ (resp. $W_{\a,\b}^{(n_2)}$). Moreover,
$\left(V(1,-\frac{1}{4})\ot V'_{\a,\b}\right )/W_{\a,\b}^{(n_1)}$
(resp. $\left(V(1,-\frac{1}{4})\ot V'_{\a,\b}\right
)/W_{\a,\b}^{(n_2)}$) is a highest weight module of highest weight
$(1,\b-\frac{5}{4} + \sqrt{1-\b})$ (resp. $(1,\b-\frac{5}{4} -
\sqrt{1-\b})$).

(3). If $n_1,n_2 \in \Z$, then we have two cases
\begin{enumerate}
\item[(i).] $\a=0$, $\b=1-k^2$, $n_1=-k^2+k$ and $n_2=-k^2-k$ for $k \in \N$;
\item[(ii).] $\a=\frac{1}{4}$, $\b=1-(l+\frac{1}{2})^2$, $n_1=-l^2$ and $n_2=-(l+1)^2$ for $l \in \Z_+$.
\end{enumerate}
In case $\a=0$ and $\b=0$, the module $V(1,-\frac{1}{4})\ot
V'_{0,0}$ has a unique simple submodule $W_{0,0}^{(-2)}$ and
$\left(V(1,-\frac{1}{4})\ot V'_{0,0}\right )/W_{0,0}^{(-2)}$ is a
highest weight module of highest weight $(1,-\frac{9}{4})$. In the
rest cases, $V(1,-\frac{1}{4})\ot V'_{\a,\b}$ has a unique simple
submodule $W_{\a,\b}^{(n_1)}$ and another proper submodule
$W_{\a,\b}^{(n_2)}$ strictly containing $W_{\a,\b}^{(n_1)}$;
moreover, $W_{\a,\b}^{(n_2)}/W_{\a,\b}^{(n_1)}$ is a highest weight
module of highest weight $(1,n_1+\a-\frac{1}{4})$ and
$\left(V(1,-\frac{1}{4})\ot V'_{\a,\b}\right) /W_{\a,\b}^{(n_2)}$ is
a highest weight module of highest weight $(1,n_2+\a-\frac{1}{4})$.

(III). Let $(c, h)=(1, -1)$. In this case we take $p=-q=1$ and
$m=2$. Then from \cite{FF} we see that $J(1, -1)$ is generated by
only one singular vector of degree $3$.  By direct computation we
can deduce that $J(1, -1)=U(\Vir_-)Qu$. where
$Q=d_{-1}^3+4d_{-2}d_{-1}+2d_{-3}$. We deduce that
\begin{equation*}
\begin{split}
\phi_n(Q)= &(\b-\a-n-3)(\b-\a-n-2)(\b-\a-n-1)\\
& \quad +4(2\b-\a-n-3)(\b-\a-n-1) + 2(3\b-\a-n-3) \\
= (\b-& \a-n)(n^2+2(1-\b +\a) n+2 \a -2 \b \a +\b^2-3+\a ^2+2\b ).
\end{split}
\end{equation*}
Here, we just consider the case that all roots of the equation
$\phi_n(Q)=0$ for $n$ are integers. It is not hard to see that we
have two cases:
\begin{enumerate}
\item $\a=0$, $\b=1-k^2$, $n_1=-k^2+2k$, $n_2=-k^2+1$ and $n_3=-k^2-2k$ for $k \in \N$;
\item $\a=\frac{3}{4}$, $\b=1-(l+\frac{1}{2})^2$, $n_1=-l^2+l$, $n_2=-l^2-l$ and $n_3=-l^2-3l-2$ for $l \in \Z_+$.
\end{enumerate}
In case $\a=0$ and $\b=0$, the module $V(1,-1)\ot V'_{0,0}$ has
different proper submodules $W_{0,0}^{(-3)}$ and $W_{0,0}^{(1)}$
where $W_{0,0}^{(1)}$ is simple.  We know that the quotients
$W_{0,0}^{(-3)}/W_{0,0}^{(1)}$ and $\left(V(1,-1)\ot V'_{0,0}\right
)/W_{0,0}^{(-3)}$are highest weight modules. In the remaining cases,
$V(1,-1)\ot V'_{\a,\b}$ has three different proper submodules
$W_{\a,\b}^{(n_i)}$ where $i=1,2,3$.
\end{exa}


\begin{exa} Now we give some examples such that the unique maximal submodule
$J(c,h)$ of the Verma module $M(c,h)$ cannot be  generated by one
singular vector.

(I). Let $(c, h)=(0, 0)$. We have $J(0,
0)=U(\Vir_-)Q_1u+U(\Vir_-)Q_2 u$, where $Q_1=d_{-1}$ and
$Q_2=d_{-2}$. Then $\phi_n(Q_1)=(\b-\a-n-1)$ and
$\phi_n(Q_2)=(2\b-\a-n-2)$. From $\phi_n(Q_1)=\phi_n(Q_2)=0$ we
deduce $\a=0$, $\b=1$. From the assumption that   $\b\ne1$, we do
not have any solutions for $n$ with $\phi_n(Q_1)=\phi_n(Q_2)=0$.
Then $V(0,0)\ot V'_{\a,\b}$ is always simple. Indeed, we have
$V(0,0) \ot V'_{\a,\b} \cong V'_{\a,\b}$.

(II). Let $(c, h)=(-\frac{22}{5}, 0)$. In this case we take $p=2,
q=-5$ and $m=3$.  Then from \cite{FF} we see that $J(c,h)$ is
generated by two singular vectors of degree $1$ and $4$
respectively. It is not hard to verify that
$J(c,h)=U(\Vir_-)Q_1u+U(\Vir_-)Q_2 u$, where $Q_1=d_{-1}$ and
$Q_2=3d_{-2}^2+5d_{-4}$. Then we can get
$$
\phi_n(Q_1)=\b-\a-n-1
$$
and
$$
\phi_n(Q_2)=3(2\b-\a-n-4)(2\b-\a-n-2)+5(4\b-\a-n-4).
$$
Plugging $n=\b-\a-1$ in the second equation, we have
$$
\phi_n(Q_2)=3(\b-3)(\b-1)+5(3\b-3)=3(\b-1)(\b+2).
$$
Since $\b \neq 1$, then $V(-\frac{22}{5}, 0) \ot V'_{\a,\b}$ is
simple if and only if $(\a,\b) \neq (0,-2)$. Moreover,
$V(-\frac{22}{5}, 0) \ot V'_{0,-2}$ has a unique simple submodule
$W_{0,-2}^{(-3)}$, and $  \left(V(-\frac{22}{5}, 0) \ot
V'_{0,-2}\right)/W_{0,-2}^{(-3)}$ is a highest weight module of
highest weight $(-\frac{22}{5},-3)$ which is an irreducible Verma
module.

(III). Let $(c, h)=(\frac{1}{2}, -\frac12)$.  In this case we take
$p=3, q=-4$ and $m=5$.  From \cite{FF} we see that $J(c,h)$ is
generated by two singular vectors of degree $2,3$.  It is not hard
to verify that $J(c,h)=U(\Vir_-)Q_1u+U(\Vir_-)Q_2 u$, where
$Q_1=3d^2_{-1}+4d_{-2}$,  $Q_2=4d_{-1}^3+12d_{-2}d_{-1}+3d_{-3}$,
and $Q_1u$ is a singular vector while $Q_2u$ is obtained by
subtracting some element in $U(\Vir_-)Q_1u$ from the other singular
vector. Then
$$
\phi_n(Q_1)=3(\b-\a-n-2)(\b-\a-n-1)+4(2\b-\a-n-2), $$ $$
\phi_n(Q_2)=4(\b-\a-n-3)(\b-\a-n-2)(\b-\a-n-1)$$
$$\hskip 2cm +12(2\b-\a-n-3)(\b-\a-n-1)+3(3\b-\a-n-3).
$$
Setting $x=n+\a-\b+2$, the above equations become
$$
f(x)=\phi_n(Q_1)=3x(x-1)-4(x-\b)
$$
$$
g(x)=\phi_n(Q_2)=-4x(x-1)(x+1)+12(x-\b+1)(x-1)-3(x-2\b+1).
$$
Now we get $g(x)=q(x)f(x)+r(x)$, where
\begin{equation*}
q(x)=-\frac{4}{3}x+\frac{8}{9}\quad {\rm and}\quad
r(x)=-\frac{5}{9}(12 \b-13)x-15+\frac{130}{9}\b.
\end{equation*}
It is easy to see that $12 \b-13\ne0$. Then
$$
f(x)=\Bigg(-\frac{27x}{5(12 \b-13)}+\frac{18(3 \b -5)}{5(12
\b-13)^2}\Bigg)r(x) $$ $$+\frac{18(16 \b-15)(\b-1)(2\b-1)}{(12
\b-13)^2}.
$$
Thus to obtain that $\phi_n(Q_1)=\phi_n(Q_2)=0$, we must have
$\b=\a=\frac{1}{2}$ and $n=0$, or $\b =\frac{15}{16}$,
$\a=\frac{7}{16}$ and $n=0$. We have $V(\frac{1}{2},-\frac{1}{2})
\ot V'_{\a,\b}$ is simple if and only if $(\a,\b) \neq
(\frac{1}{2},\frac{1}{2})$ or $(\frac{7}{16},\frac{15}{16})$.
Moreover, $V(\frac{1}{2},-\frac{1}{2}) \ot
V'_{\frac{1}{2},\frac{1}{2}}$ has a unique simple submodule
$W_{\frac{1}{2},\frac{1}{2}}^{(0)}$ and
$\left(V(\frac{1}{2},-\frac{1}{2}) \ot
V'_{\frac{1}{2},\frac{1}{2}}\right)/W_{\frac{1}{2},\frac{1}{2}}^{(0)}$
is a highest weight module of highest weight $(\frac{1}{2},0)$;
$V(\frac{1}{2},-\frac{1}{2}) \ot V'_{\frac{7}{16},\frac{15}{16}}$
has a unique simple submodule $W_{\frac{7}{16},\frac{15}{16}}^{(0)}$
and $\left(V(\frac{1}{2},-\frac{1}{2}) \ot
V'_{\frac{7}{16},\frac{15}{16}}\right)/W_{\frac{7}{16},\frac{15}{16}}^{(0)}$
is a highest weight module of highest weight
$(\frac{1}{2},-\frac{1}{16})$.

(IV). Let $(c, h)=(\frac{1}{2}, 0)$.  In this case we take $p=3,
q=-4$ and $m=1$.  From \cite{FF} we see that $J(c,h)$ is  generated
by two singular vectors of degree $1$ and $6$.  It is not hard to
verify that $J(c,h)=U(\Vir_-)Q_1u+U(\Vir_-)Q_2 u$, where
$Q_1=d_{-1}$, $Q_2=64d_{-2}^3-93d_{-3}^2+264d_{-4}d_{-2}-108d_{-6}$
and $Q_1u$ is a singular vector in $M(\frac{1}{2},0)$ and $Q_2u$ is
obtained by subtracting some element in $U(\Vir_-)Q_1u$ from the
other singular vector.
Then we can get
$$
\phi_n(Q_1)=\b-\a-n-1,
$$
\begin{equation*}
\begin{split}
\phi_n(Q_2)=& 64(2\b-\a-n-6)(2\b-\a-n-4)(2\b-\a-n-2) \\
& \quad -93(3\b-\a-n-6)(3\b-\a-n-3) \\
& \quad +264(4\b-\a-n-6)(2\b-\a-n-2) \\
& \quad -108(6\b-\a-n-6).
\end{split}
\end{equation*}
Substituting $n=\b-\a-1$ in the second equation, we have
$$
\phi_n(Q_2)=2(16 \b-15)(\b-1)(2 \b-1).
$$
So letting $\phi_n(Q_1)=\phi_n(Q_2)=0$, we have $\b=\a=\frac{1}{2}$
and $n=-1$ or $\b =\a=\frac{15}{16}$ and $n=-1$.

We have $V(\frac{1}{2},0) \ot V'_{\a,\b}$ is simple if and only if
$(\a,\b) \neq (\frac{1}{2},\frac{1}{2})$ or
$(\frac{15}{16},\frac{15}{16})$. Moreover, $V(\frac{1}{2},0) \ot
V'_{\frac{1}{2},\frac{1}{2}}$ has a unique simple submodule
$W_{\frac{1}{2},\frac{1}{2}}^{(-1)}$ and $\left(V(\frac{1}{2},0) \ot
V'_{\frac{1}{2},\frac{1}{2}}\right)/W_{\frac{1}{2},\frac{1}{2}}^{(-1)}$
is a highest weight module of highest weight
$(\frac{1}{2},-\frac{1}{2})$; $V(\frac{1}{2},0) \ot
V'_{\frac{15}{16},\frac{15}{16}}$ has a unique simple submodule
$W_{\frac{15}{16},\frac{15}{16}}^{(-1)}$ and $\left(V(\frac{1}{2},0)
\ot V'_{\frac{15}{16},
\frac{15}{16}}\right)/W_{\frac{15}{16},\frac{15}{16}}^{(-1)}$ is a
highest weight module of highest weight
$(\frac{1}{2},-\frac{1}{16})$.

(V). Let $(c, h)=(\frac{1}{2}, -\frac{1}{16})$.  In this case we
take $p=3, q=-4$ and $m=2$.  From \cite{FF} we see that $J(c,h)$ is
generated by two singular vectors of degree $2,4$.  It is not hard
to verify that $J(c,h)=U(\Vir_-)Q_1u+U(\Vir_-)Q_2 u$, where
$Q_1=4d^2_{-1}+3d_{-2}$ and
$Q_2=144d_{-1}^4+600d_{-2}d^2_{-1}+264d_{-3}d_{-1}+49d^2_{-2}+36d_{-4}$.
Then we can get
$$
\phi_n(Q_1)=4(\b-\a-n-2)(\b-\a-n-1)+3(2\b-\a-n-2)
$$
\begin{equation*}
\begin{split}
&\phi_n(Q_2)\\
=& 144(\b-\a-n-4)(\b-\a-n-3)(\b-\a-n-2)(\b-\a-n-1) \\
& \quad +600(2\b-\a-n-4)(\b-\a-n-2)(\b-\a-n-1) \\
& \quad +264(3\b-\a-n-4)(\b-\a-n-1)\\
& \quad +49(2\b-\a-n-4)(2\b-\a-n-2) \\
& \quad +36(4\b-\a-n-4).
\end{split}
\end{equation*}
By straightforward computations we deduce that
$$\phi_n(Q_2)=\phi_n(Q_1)q(n)-20(\a+n)(16\b-15)$$
for some polynomial $q(n)$ and
$$\phi_n(Q_1)=(4\a-8\b+9+4n)(\a+n)+2(2\b-1)(\b-1).$$

Letting $\phi_n(Q_1)=\phi_n(Q_2)=0$, we get that $\a=0$ or
$\b=\frac{15}{16}$. If $\a=0$, we deduce that $ \b=\frac{1}{2}$ and
$n=0$. If $\b=\frac{15}{16}$, we can deduce that $\a=\frac{1}{16}$
and $n=0$, or $\a=\frac{9}{16}$ and $n=-1$. We have
$V(\frac{1}{2},-\frac{1}{16}) \ot V'_{\a,\b}$ is simple if and only
if $(\a,\b) \neq (0,\frac{1}{2}) ,(\frac{1}{16}, \frac{15}{16})$ or
$(\frac{9}{16},\frac{15}{16})$.

Moreover, $V(\frac{1}{2},-\frac{1}{16}) \ot V'_{0,\frac{1}{2}}$ has
a unique simple submodule $W_{0,\frac{1}{2}}^{(0)}$ and
$\left(V(\frac{1}{2},-\frac{1}{16}) \ot V'_{0,\frac{1}{2}}\right
)/W_{0,\frac{1}{2}}^{(0)}$ is a highest weight module of highest
weight $(\frac{1}{2},-\frac{1}{16})$; $V(\frac{1}{2},-\frac{1}{16})
\ot V'_{\frac{1}{16},\frac{15}{16}}$ has a unique simple submodule
$W_{\frac{1}{16},\frac{15}{16}}$ and
$\left(V(\frac{1}{2},-\frac{1}{16}) \ot
V'_{\frac{1}{16},\frac{15}{16}}\right
)/W_{\frac{1}{16},\frac{15}{16}}$ is a highest weight module of
highest weight $(\frac{1}{2},0)$; $V(\frac{1}{2},-\frac{1}{16}) \ot
V'_{\frac{9}{16},\frac{15}{16}}$ has a unique simple submodule
$W_{\frac{9}{16},\frac{15}{16}}$ and
$\left(V(\frac{1}{2},-\frac{1}{16}) \ot
V'_{\frac{9}{16},\frac{15}{16}}\right
)/W_{\frac{9}{16},\frac{15}{16}}$ is a highest weight module of
highest weight $(\frac{1}{2},-\frac{1}{2})$.


%

\end{exa}

\begin{rem} Comparing the case (III, IV, V) in the above example,
we see that using the tensor product $V(\frac{1}{2},-\frac{1}{2})\ot
V'_{\a,\b}$ we can and only can get the highest weight modules with
highest weight $(\frac{1}{2},0)$ and $(\frac{1}{2},-\frac{1}{16})$,
using the tensor product $V(\frac{1}{2},0)\ot V'_{\a,\b}$ we can and
only can get the highest weight modules with highest weight
$(\frac{1}{2},-\frac{1}{2})$ and $(\frac{1}{2},-\frac{1}{16})$, and
using the tensor product $V(\frac{1}{2},-\frac{1}{16})\ot
V'_{\a,\b}$ we can and only can get the highest weight modules with
highest weight $(\frac{1}{2},-\frac{1}{2})$, $(\frac{1}{2},0)$ and
$(\frac{1}{2},-\frac{1}{16})$.

It is well known that in the physics literature that the conformal
field theory associated to $V(1/2,0)$ has three simple modules
$V(1/2,0)$, $V(1/2,-1/2)$ and $V(1/2,-1/16)$ and the fusion rules
among these modules are given by
\begin{equation*}\begin{split}
&V(1/2,-1/2)\times V(1/2,-1/2)=V(-1/2,0),\\
&V(1/2,-1/2)\times V(1/2,-1/16)=V(1/2,-1/16),\\
&V(1/2,-1/16)\times V(1/2,-1/16)=V(1/2,-1/2)+ V(1/2,0),
\end{split}\end{equation*}
and $V(1/2,0)$ is the identity in the fusion rule relations. (See
\cite{DMZ}). From (III, IV, V) of Example 12, there seems to be some
mysterious relations among these three modules.

\end{rem}

\begin{exa}
For any coprime $p, q \in \{2, 3, 4, \ldots\}$, set
\begin{equation*}
c_{p,q} = 1 - 6\frac{(p-q)^2}{pq}.
\end{equation*}
From Theorem A in \cite{A} we know that the maximal submodule
$J(c,0)$ of the Verma module $M(c,0)$ is generated by $d_{-1}u$ if
and only if $c\ne c_{p,q}$. In this example we assume that $c\ne
c_{p,q}$. Then the irreducible module $V(c,0)=M(c,0)/J(c,0)$ has a
basis
\begin{equation*}
d_{-l}^{n_l}\cdots d_{-3}^{n_3}d_{-2}^{n_2}u, \quad
n_2,n_3,\ldots,n_l \in \Z_+.
\end{equation*}
If $ \b-\a =k\in \Z$, by Theorem \ref{simplicity} we know that
$W=V(c,0) \otimes V'_{\a,\b}$ has a smallest simple submodule
$W^{(k-1)}$ which is generated by $u\otimes v_{i}, i \geq k$ and
$W/W^{(k-1)}$ is a highest weight module of highest weight
$(c,\b-1)$.

\begin{claim}
$W/W^{(k-1)}$ is isomorphic to the Verma module $M(c,\b-1)$.
\end{claim}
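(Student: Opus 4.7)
The image $\bar u$ of $u\otimes v_{k-1}$ is a highest weight vector of weight $\b-1$ generating $W/W^{(k-1)}$, so the universal property of Verma modules gives a canonical surjection $\pi\colon M(c,\b-1)\twoheadrightarrow W/W^{(k-1)}$. The plan is to show $\pi$ is an isomorphism by a comparison of formal characters: since $\pi$ is surjective, it suffices to verify that $\dim(W/W^{(k-1)})_{\b-1-m}=p(m)$ for every $m\ge 0$, where $p(m)$ is the number of partitions of $m$. Passing to the shifting realization of Section~\ref{section_simlicity}, $W\cong L=V(c,0)\otimes\C[t^{\pm 1}]$, $\bar u$ becomes the image of $u\otimes t^{k-1}$, and the weight space $L_{\b-1-m}$ is canonically identified with $V(c,0)\otimes t^{k-1-m}\cong V(c,0)$; under this identification $W^{(k-1)}_{\b-1-m}$ becomes a subspace $I_m\subseteq V(c,0)$, so the task reduces to $\dim V(c,0)/I_m=p(m)$.

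Because $c\ne c_{p,q}$, $V(c,0)$ has PBW basis $\{d_{-\lambda}u\}$ over partitions $\lambda$ with parts $\ge 2$, so $\dim V(c,0)_{-n}=q(n)$. The identity $p(m)=\sum_{n=0}^{m}q(n)$ suggests taking $K_m:=\bigoplus_{n\le m}V(c,0)_{-n}$ as a candidate complement and proving the direct sum decomposition
\[
V(c,0)=I_m\oplus K_m.
\]
The inclusion $I_m+K_m=V(c,0)$ is handled by induction on degree: for each partition $\lambda$ with parts $\ge 2$ and $|\lambda|=n\ge m+1$, the element $d_{-\lambda}(u\otimes t^{k+n-m-1})$ lies in $W^{(k-1)}_{\b-1-m}$, and iterating the shifting formula $d_{-j}(Pu\otimes t^{r})=(d_{-j}Pu)\otimes t^{r-j}+(\a+r-\deg(P)-j\b)Pu\otimes t^{r-j}$ shows that its image in $V(c,0)$ equals $d_{-\lambda}u$ modulo strictly lower $V(c,0)$-degrees; induction on $n$ closes the argument.

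The transversality $I_m\cap K_m=0$ is the crux. Explicitly, if $y=\sum_{n\le m}y_n$ with $y_n\in V(c,0)_{-n}$ and $y\otimes t^{k-1-m}\in W^{(k-1)}$, then $y=0$. The base case $m=0$ is immediate from the decomposition $V(c,0)\otimes t^{k-1}=W^{(k-1)}_{k-1}\oplus\C(u\otimes t^{k-1})$ supplied by Remark~\ref{remark}. For the inductive step, the plan is to apply a carefully chosen sequence of operators from $\Vir_+$ to $y\otimes t^{k-1-m}$; each application keeps the result inside $W^{(k-1)}$ and raises the weight, and the shifting formula produces scalar coefficients---most notably $(m+1)\b-1$ arising when $d_m$ acts on the top layer---that are nonzero precisely because of the standing assumption $\b\ne 1$. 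This allows one to peel off the top homogeneous piece $y_m$ by invoking the $m=0$ decomposition applied to the resulting weight-$(\b-1)$ element and then reduce to an instance of the claim with smaller $m$. The principal obstacle is orchestrating this extraction cleanly: the $\Vir_+$-action mixes the different graded components of $y$, so one must track the scalar factors produced by the shifting formula carefully to isolate each $y_n$ in turn, and the hypothesis $\b\ne 1$ is used exactly at the moments where these scalars must be nonzero.
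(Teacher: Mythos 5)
Your reduction is set up correctly: the surjection $M(c,\b-1)\twoheadrightarrow W/W^{(k-1)}$ exists, the identity $p(m)=\sum_{n\le m}q(n)$ is right, and the whole claim does come down to the transversality $I_m\cap K_m=0$ (indeed, only to that: once $I_m\cap K_m=0$ you get $\dim (W/W^{(k-1)})_{\b-1-m}\ge\dim K_m=p(m)$, and surjectivity gives the reverse inequality, so your step establishing $I_m+K_m=V(c,0)$ is not even needed). But the crux is exactly the part you have not proved, and the mechanism you propose for it does not work. First, the scalar $(m+1)\b-1$ that you want to be nonzero is \emph{not} controlled by the hypothesis $\b\ne 1$: it vanishes whenever $\b=\tfrac{1}{m+1}$, and such $\b$ are admissible (for instance $\a=\b=\tfrac12$, $k=0$, $m=1$), while the claim is still true there. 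Second, even when the scalar is nonzero, applying $d_m$ to $y\otimes t^{k-1-m}$ only tells you that $d_my_m+\sum_n\big((m+1)\b-1-m+n\big)y_n$ lies in the codimension-one subspace $\ker\tp_{k-1}$ of $V(c,0)$ (the $m=0$ decomposition is a direct sum with a one-dimensional complement $\C u$, not a vanishing statement), so you cannot ``peel off'' $y_m$ from a single such relation; and since $d_m y_m$ is itself a multiple of $u$, the raising operators genuinely entangle the layers. You acknowledge this obstacle but do not resolve it, so the proof is incomplete at its decisive step.

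The paper closes this gap by a different and much cheaper observation, which in fact reuses the spanning set you already introduced for your step $I_m+K_m=V(c,0)$. Because $d_{-1}u=0$ and $d_{-1}(u\ot t^{k})=0$, one has
\begin{equation*}
W^{(k-1)}=\sum_{i,l_2,\ldots,l_s\in\Z_+}\C\, d_{-s}^{l_s}\cdots d_{-2}^{l_2}\,(u\ot t^{k+i}),
\end{equation*}
and a generator of weight $\b-1-m$ forces $\sum_j jl_j=m+1+i\ge m+1$. Expanding by the shifting formula, each such generator equals $(d_{-s}^{l_s}\cdots d_{-2}^{l_2}u)\ot t^{k-1-m}$ plus terms of strictly smaller $V(c,0)$-degree; since the monomials $d_{-s}^{l_s}\cdots d_{-2}^{l_2}u$ are linearly independent in $V(c,0)$ (this is where $c\ne c_{p,q}$ enters), any nonzero element of $I_m$ has a nonzero component in some $V(c,0)_{-n}$ with $n\ge m+1$, hence cannot lie in $K_m=\bigoplus_{n\le m}V(c,0)_{-n}$. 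That leading-term argument is the statement $I_m\cap K_m=0$ you need; I would replace your raising-operator scheme by it.
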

\begin{proof}
It is enough to show that  $x(u \otimes t^{k-1}) \notin W^{(k-1)}$
for all nonzero homogeneous element $x \in U(\Vir_-)$. Let's prove
this  by contradiction. Suppose $x(u \otimes t^{k-1}) \in W^{(k-1)}$
for some $x \in U(\Vir_-)_{-m}$ where $m\in\Z_+$.

Since  $d_{-1}u=0$ and $d_{-1}(u \otimes t^k)=0$, 
then
\begin{equation*}
W^{(k-1)} = \sum_{i,l_2,\ldots,l_s \in \Z_+} \C d_{-s}^{l_s} \cdots
d_{-3}^{l_3} d_{-2}^{l_2} (u \otimes t^{k+i}).
\end{equation*}
Now $x(u \otimes t^{k-1}) \in W^{(k-1)}$ has the following form:
\begin{equation*}
x(u \otimes t^{k-1}) = \sum_{(l_2,\ldots,l_s) \in I}
a_{l_2,\ldots,l_s} d_{-s}^{l_s} \cdots d_{-3}^{l_3} d_{-2}^{l_2} (u
\otimes t^{k+\big(\sum\limits_{i=2}^s il_i\big)-m-1})
\end{equation*}
where $I\subseteq (\Z_+)^{s-1}$ is a finite subset. Notice that
$\big(\sum\limits_{i=2}^s il_i\big)-m-1 \in \Z_+$ for all
$(l_2,\ldots,l_s) \in I$, that is, $\sum\limits_{i=2}^s il_i \geq
m+1$.

On the other hand, we have $x(u \otimes t^{k-1}) \in
\bigoplus\limits_{i=0}^m V(c,0)_{-i} \otimes t^{k-m-1}$. Let
$(l_2,\ldots,l_s)$ be the index in $I$ such that
$\sum\limits_{i=2}^s il_i$ is maximal, then we have
\begin{equation*}
(d_{-s}^{l_s} \cdots d_{-3}^{l_3} d_{-2}^{l_2}u) \otimes t^{k-m-1}
\in
 \bigoplus\limits_{i=0}^m V(c,0)_{-i} \otimes t^{k-m-1},
\end{equation*}
i.e., $\sum\limits_{i=2}^s il_i \leq m$ which is a contradiction.
\end{proof}
\end{exa}

We like to conclude our paper by pointing out that we are not  able
to determine the conditions for  the quotient module
$W^{(n-1)}/W^{(n)}$ in Remark 9 to be simple or to be a Verma module
when it is not trivial. Our examples show that some of them are
simple while some others are Verma modules.

\newpage

\begin{center}
\bf Acknowledgments
\end{center}
The research was carried out during the visit of the first and
second authors at Wilfrid Laurier University in 2012 and 2013. They
gratefully acknowledge the support (a short term grant) and
hospitality of Wilfrid Laurier University. The second author is
partially supported by NSF of China (Grant 11101380) and CSC of
China (Grant 2010841037); the last author is partially supported by
NSF of China (Grant 11271109) and NSERC. The authors want to thank
Prof. R. Lu for a lot of helpful discussions when they were
preparing the paper.

\vskip 10pt

\noindent H.C.: School of Mathematical Sciences, University of
Science and Technology of China, Hefei 230026, Anhui, P. R. China.\\
e-mail: {\tt hjchenmath\symbol{64}gmail.com}.

\noindent X.G.: Department of Mathematics, Zhengzhou University,
Zhengzhou 450001, Henan, P. R. China. e-mail: {\tt
guoxq\symbol{64}zzu.edu.cn}.

\noindent K.Z.: Department of Mathematics, Wilfrid Laurier
University, Waterloo, ON, Canada N2L 3C5; and College of Mathematics
and Information Science, Hebei Normal (Teachers) University,
Shijiazhuang 050016, Hebei, P. R. China. e-mail: {\tt
kzhao\symbol{64}wlu.ca}.

\end{document}